\newtheorem{teo}{Theorem}[section]
\newtheorem{lemma}{Lemma}[section]
\newtheorem{defn}{Definition}[section]
\newtheorem{prop}{Proposition}[section]
\newtheorem{cor}{Corollary}[section]
\newcommand{\Z}{\mathbb{Z}}
\newcommand{\I}{\mathcal{I}}
\newcommand{\bfx}{\boldsymbol{x}}
\newcommand{\bfe}{\boldsymbol{e}}
\newcommand{\bfa}{\boldsymbol{a}}
\newcommand{\bfb}{\boldsymbol{b}}
\newcommand{\bfv}{\boldsymbol{v}}
\newcommand{\R}{\mathbb{R}}
\newcommand{\C}{\mathcal{C}}
\newcommand{\T}{\mathcal{T}}
\newcommand{\D}{\mathcal{D}}
\newcommand{\La}{\mathcal{L}}
\newcommand{\Po}{\mathcal{P}}
\newcommand{\G}{\mathcal{G}}
\newcommand{\h}{\mathcal{H}}
\newcommand{\K}{\mathcal{K}}
\newcommand{\cf}{\mathbf{c}}
\title{On the subgroup structure of the hyperoctahedral group in six dimensions}
\date{}
\author[1,3]{Emilio Zappa\thanks{ez537@york.ac.uk}}
\author[1,2,3]{Eric C. Dykeman}
\author[1,2,3]{Reidun Twarock}
\affil[1]{Department of Mathematics}
\affil[2]{Department of Biology}
\affil[3]{York Centre for Complex Systems Analysis,  University of York, York, UK}
\begin{document}
\maketitle                       

\begin{abstract}
	We investigate the subgroup structure of the hyperoctahedral group in six dimensions. In particular, we study the subgroups isomorphic to the icosahedral group. We classify the orthogonal crystallographic representations of the icosahedral group
	and analyse their intersections and subgroups, using results from graph theory and their spectra.
\end{abstract}

\section{Introduction}
The discovery of quasicrystals in 1984 by Shechtman et al. has spurred the mathematical and physical community to develop mathematical tools in order to study structures with non-crystallographic
symmetry.

Quasicrystals are alloys with five-fold or ten-fold symmetry in their atomic positions, and therefore they cannot be organised as (periodic) lattices. In crystallographic terms,
their symmetry group $G$ is non-crystallographic. However, the non-crystallographic symmetry leaves a lattice invariant in higher dimensions, providing an integral representation of $G$, 
usually referred to as \emph{crystallographic representation}. This representation is reducible and contains a two- or three- dimensional invariant subspace.  
This is the starting point to construct quasicrystals via the \emph{cut-and-project} method, described, among others, by Senechal \cite{senechal}, or as a model set \cite{moody}. 

In this paper we are interested in icosahedral symmetry. The icosahedral group $\I$ consists of all the rotations that leave a regular icosahedron invariant, and it is the largest of the finite subgroup of $SO(3)$. $\I$ contains elements of order $5$, therefore it is non-crystallographic in 3D; the (minimal) crystallographic 
representation of it is six-dimensional \cite{levitov}. The full icosahedral group, denoted by $\I_h$, contains also the reflections and is equal to $\I \times C_2$, where $C_2$ denotes the cyclic group of order 2. $\I_h$ is isomorphic to the Coxeter group $H_3$ and is made up of 120 elements. In this work we focus on the icosahedral group $I$, because it plays a central role in applications in virology \cite{giuliana}. However, our considerations apply equally to the larger group $\I_h$. 

Levitov and Rhyner \cite{levitov} classified the lattices in $\R^6$ that are left invariant by $\I$: 
there are, up to equivalence, exactly three lattices, usually referred to as \emph{icosahedral Bravais lattices}, namely the simple cubic (SC), body-centered cubic (BCC) and face-centered cubic (FCC). The point group of these lattices 
is the six-dimensional hyperoctahedral group, denoted by $B_6$, which is a subgroup of $O(6)$ and can be represented in the standard basis of $\R^6$ as the set of all $6 \times 6$ orthogonal and integral matrices. The subgroups
of $B_6$ which are isomorphic to the icosahedral group constitute the integral representations of it; among them, the crystallographic ones (following terminology from \cite{levitov}) are those which split, in $GL(6,\R)$, into two 
three-dimensional irreducible representations of $\I$. Therefore they carry two subspaces in $\R^3$ which are invariant under the action of $\I$ and can be used to model the quasiperiodic structures. 

The embedding of the icosahedral group into $B_6$ has been used extensively in the crystallographic literature. Katz \cite{katz}, Senechal \cite{senechal}, Kramer and Zeidler, \cite{zeidler}, Grimm \cite{grimm}, among others, 
start from a six-dimensional crystallographic representation of $\I$ to construct  three-dimensional Penrose tilings and icosahedral quasicrystals. Kramer \cite{kramer} and  
Indelicato et al. \cite{giuliana} also apply it to study structural transitions in quasicrystals. In particular, Kramer considers in $B_6$ a  representation of $\I$ and a representation of the octahedral group $\mathcal{O}$ which share a tetrahedral subgroup,  and defines a continous rotation (called Schur rotation) between cubic and icosahedral symmetry which preserves intermediate tetrahedral symmetry. Indelicato et al. define a transition between two icosahedral lattices as a 
continous path connecting the two lattice bases keeping some symmetry preserved, described by a maximal subgroup of the icosahedral group. The rationale behind this approach is that the two corresponding lattice groups share a common subgroup. These two approaches are shown to be 
related \cite{paolo}, hence the idea is that it is possible to study the transitions between icosahedral quasicrystals by considering two distinct crystallographic representations of $\I$ in $B_6$ which share a common subgroup. 

These papers motivate the idea of studying in some detail the subgroup structure of $B_6$. In particular, we focus on the subgroups isomorphic to the icosahedral group and its subgroups. Since the group is quite large (it has $2^66!$ elements), we use for computations the software \texttt{GAP} \cite{GAP},
 which is designed to compute properties of finite groups. More precisely, based on \cite{baake}, we generate the elements of $B_6$ in \texttt{GAP} as a subgroup of the symmetric group $S_{12}$ and then find the classes of subgroups isomorphic to the icosahedral group. Among them we isolate, using results from character theory, the class of crystallographic representations of $\I$. In order to study the subgroup structure of this class, we propose a method using graph theory and their spectra. In particular, we treat the class 
of crystallographic representations of $\I$ as a graph: we fix a subgroup $\G$ of $\I$ and say that two elements in the class are adjacent if their intersection is equal to a subgroup isomorphic to $\G$. We call the resulting graph $\G$-graph. These graphs are quite large and difficult to visualise; however,  by analysing their spectra \cite{doob} we can study in some detail their topology, hence describing the intersection and the subgroups shared by different representations. 
        
The paper is organised as follows. After recalling, in Section \ref{crystal_section}, the definitions of point group and lattice group, we define, in Section \ref{ico_section},
the crystallographic representations of the icosahedral group and the icosahedral lattices in 6 dimension. We provide, following \cite{haase}, a  method for the construction of the projection into 3D using tools from the representation theory of finite groups. 
In Section \ref{cube_section} we classify, with the help of \texttt{GAP}, the crystallographic representations of $\I$. In Section \ref{grafi_section} we study their subgroup structure, introducing the concept of $\G$-graph, where $\G$ is a subgroup of $\I$. 

\section{Lattices and non-crystallographic groups}\label{crystal_section}

Let $\bfb_i$, $i=1,\ldots n$ be a basis of $\R^n$, and let $B \in GL(n, \R)$ be the matrix whose columns are the components of $\bfb_i$ with respect to the canonical basis $\{\bfe_i, i=1,\ldots, n \}$ of $\R^n$. 
A \emph{lattice} in $\R^n$ is a $\Z$-free module of rank $n$ with basis $B$, i.e.
\begin{equation*}
\mathcal{L}(B) = \left\{ \bfx = \sum_{i=1}^n m_i \bfb_i : \; m_i \in \Z \right\}.
\end{equation*}
Any other lattice basis is given by $BM$, where $M \in GL(n,\Z)$, the set of invertible matrices with integral entries (whose determinant is equal to $\pm1$) \cite{artin}. 

The \emph{point group} of a lattice $\La$ is given by all the orthogonal transformations that leave the lattice invariant \cite{zanzotto}:
\begin{equation*}
\Po(B) = \{ Q \in O(n) : QB = BM, \; \exists M \in GL(n,\Z) \}.
\end{equation*}

We notice that, if $Q \in \Po(B)$, then $B^{-1}QB = M \in GL(n,\Z)$. In other words, the point group consists of all the orthogonal matrices which can be represented in the basis $B$ as 
integral matrices. The set of all these matrices constitute the \emph{lattice group} of the lattice:
\begin{equation*}
\Lambda(B) = \{ M \in GL(n,\Z): M = B^{-1}QB, \; \exists Q \in \Po(B) \}.
\end{equation*}
 
The lattice group provides an \emph{integral representation} of the point group, and these are related via the equation
\begin{equation*}
\Lambda(B) = B^{-1}\Po(B)B,
\end{equation*}
and moreover the following hold \cite{zanzotto}:
\begin{equation*}
\Po(BM) = \Po(B), \quad \Lambda(BM) = M^{-1}\Lambda(B)M, \quad M \in GL(n,\Z).
\end{equation*}

We notice that a change of basis in the lattice leaves the point group invariant, whereas the corresponding lattice groups are conjugated in $GL(n,\Z)$. 
Two lattices are \emph{inequivalent} if the corresponding lattice groups are not conjugated in $GL(n,\Z)$ \cite{zanzotto}.

As a consequence of the crystallographic restriction (see, for example, \cite{grimm}) five-fold symmetry is forbidden in dimensions 2 and 3, and therefore any group $G$ containing elements 
of order 5 cannot be the point group of a two- or three-dimensional lattice. We therefore call these groups \emph{non-crystallographic}. In particular, three-dimensional icosahedral lattices cannot exist. 
However, a non-crystallographic group leaves some lattices invariant in higher dimensions, and the smallest such dimension is called the minimal embedding dimension. 
Following \cite{levitov}, we introduce:
\begin{defn}\label{cryst} Let $G$ be a non-crystallographic group. A \emph{crystallographic representation} $\rho$ of $G$ is a $D$-dimensional representation of $G$ such that:
\begin{enumerate}
\item the charaters $\chi_{\rho}$ of $\rho$ are integers;
\item $\rho$ is reducible and contains a 2- or 3- dimensional representation of $G$.
\end{enumerate}
\end{defn}
We observe that the first condition implies that $G$ must be the subgroup of the point group of a $D$-dimensional lattice. The second condition tells us that $\rho$ contains either a two- or 
three-dimensional invariant subspace $E$ of $\R^D$, usually referred to as \emph{physical space} \cite{levitov}.
 
\section{$6D$ icosahedral lattices}\label{ico_section}

The icosahedral group $\I$ is generated by two elements, $g_2$ and $g_3$, such that $g_2^2 = g_3^3 = (g_2g_3)^5 = e$, where $e$ denotes the identity element. 
It has order 60 and it is isomorphic to $A_5$, the alternating group of order 5. Its character table is as follows (note that $\tau = \frac{\sqrt{5}+1}{2}$, and that $\C_2$, $\C_3$, $\C_5$ and $\C_5^2$ denote the conjugacy classes in $\I$ of 
$g_2$, $g_3$, $g_2g_3$ and $(g_2g_3)^2$, respectively, and the numbers denote the orders of each class):
\begin{center}
\begin{tabular}{l|c c c c c}
Irrep & $E$ & $12\C_5$ & $12\C_5^2$ & $15\C_2$ & $20\C_3$ \\
\hline
$A$ & 1 & 1 & 1 & 1 & 1 \\
$T_1$ & 3 & $\tau$ & 1-$\tau$ & -1 & 0 \\
$T_2$ & 3 & 1-$\tau$ & $\tau$ & -1 & 0 \\
$G$ & 4 & -1 & -1 & 0 & 1 \\
$H$ & 5 & 0 & 0 & 1 & -1 \\
\end{tabular}
\end{center}

From the character table we see that the (minimal) crystallographic representation of $\I$ is 6-dimensional and is given by $T_1 \oplus T_2$. 
Therefore, $\I$ leaves a lattice in $\R^6$ invariant. \cite{levitov} proved that the three inequivalent lattices of this type, mentioned in the introduction and referred to as \emph{icosahedral (Bravais) lattices}, are given by, respectively: 
\begin{equation*}
\La_{SC} = \left\{ \bfx =(x_1, \ldots, x_6) : x_i \in \Z \right\},
\end{equation*}
 \begin{equation*}
\La_{BCC} = \left\{ \bfx = \frac{1}{2}(x_1, \ldots, x_6) : x_i \in \Z, \; x_i = x_j \; \text{mod}2, \forall i,j=1,\ldots,6 \right\},
\end{equation*}
\begin{equation*}
\La_{FCC} = \left\{ \bfx = \frac{1}{2}(x_1, \ldots, x_6) : x_i \in \Z, \; \sum_{i=1}^6 x_i = 0 \; \text{mod}2 \right\}.
\end{equation*}

We note that a basis of the SC lattice is the canonical basis of $\R^6$. Its point group is given by
\begin{equation}\label{B_6}
\Po_{SC} = \{ Q \in O(6) : Q = M \in GL(6,\Z) \} = O(6) \cap GL(6,\Z) \simeq O(6,\Z),
\end{equation}
which is the \emph{hyperoctahedral group} in dimension $6$, denoted by $B_6$ \cite{baake}. All the three lattices have point group $B_6$, whereas their lattice groups are different and, 
indeed, they are not conjugate in $GL(6,\Z)$ \cite{levitov}. 

Let $\h$ be a subgroup of $B_6$ isomorphic to $\I$. $\h$ provides a (faithful) integral and orthogonal representation of $\I$. Moreover, if $\h \simeq T_1 \oplus T_2$ in $GL(6,\R)$, then $\h$ is also
crystallographic (in the sense of Definition \ref{cryst}). All the other crystallographic representations are given by $B^{-1}\h B$, where 
$B \in GL(6,\R)$ is a basis of an icosahedral lattice in $\R^6$. Therefore we can focus our attention, without loss of generality, on the orthogonal crystallographic representations.

\subsection{Projection operators}\label{proiezione}

Let $\h$ be a crystallographic representation of the icosahedral group. $\h$ splits into two 3-dimensional irreducible representations (IRs), $T_1$ and $T_2$, in $GL(6,\R)$. 
This means that there exists a matrix $R \in GL(6,\R)$ such that
\begin{equation}\label{R}
\h':= R^{-1}\h R = \left( \begin{array}{cc}
T_1 & 0 \\
0 & T_2 
\end{array} \right).
\end{equation}

The two IRs $T_1$ and $T_2$ leave two three-dimensional subspaces invariant, which are usually referred to as the \emph{physical} (or parallel) space $E^{\parallel}$ and the 
\emph{orthogonal} space $E^{\perp}$ \cite{katz}. In order to find the matrix $R$ (which is not unique in general), we follow \cite{haase} and use results from the representation theory of finite groups  (for proofs and further results see,
example, \cite{fulton}). In particular, let $\Gamma : G \rightarrow GL(n,F)$ be an $n$-dimensional representation of a finite group $G$ over a field $F$ ($F = \R, \mathbb{C}$). 
By Maschke's theorem, $\Gamma$ splits, in $GL(n,F)$, as $m_1 \Gamma_1 \oplus \ldots  \oplus m_r \Gamma_r$, where $\Gamma_i : G \rightarrow GL(n_i, F)$ is an $n_i$-dimensional IR of $G$.
Then the \emph{projection operator} 
$P_i : F^n \rightarrow F^{n_i}$ is given by
\begin{equation}\label{proj}
P_i:=\frac{n_i}{|G|} \sum_{g \in G} \chi^*_{\Gamma_i}(g) \Gamma(g),
\end{equation}
where $\chi_{\Gamma_i}^*$ denotes the complex conjugate of the character of the representation $\Gamma_i$. This operator is such that its image $\text{Im}(P_i)$ is equal to a $n_i$-dimensional subspace $V_i$ of $F^n$ invariant under $\Gamma_i$.
In our case, we have two projection operators, $P_i : \R^6 \rightarrow \R^3$, $i = 1,2$, corresponding to the IRs $T_1$ and $T_2$, respectively. We assume the image of $P_1$, $\text{Im}(P_1)$,
to be equal to $E^{\parallel}$, and $\text{Im}(P_2) = E^{\perp}$. If $\{\bfe_j, j=1,\ldots,6 \}$ is the canonical basis of $\R^6$, 
then a basis of $E^{\parallel}$ (respectively $E^{\perp}$) can be found considering the set $\{\hat{\bfe}_j:=P_i\bfe_j, j=1,\ldots, 6\}$ for $i=1$ (respectively $i=2$) and then extracting a basis $\mathcal{B}_i$ from it. Since $\text{dim}E^{\parallel} = \text{dim}E^{\perp}=3$, 
we obtain $\mathcal{B}_i = \{\hat{\bfe}_{i,1},\hat{\bfe}_{i,2},\hat{\bfe}_{i,3} \}$, for $i=1,2$. The matrix $R$ can be thus written as
\begin{equation}\label{matrice_proiezione}
R = \left(\underbrace{\hat{\bfe}_{1,1},\hat{\bfe}_{1,2},\hat{\bfe}_{1,3}}_{\text{basis of} \; E^{\parallel}},\underbrace{\hat{\bfe}_{2,1},\hat{\bfe}_{2,2},\hat{\bfe}_{2,3}}_{\text{basis of} \; E^{\perp}}\right).
\end{equation}

Denoting by $\pi^{\parallel}$ and $\pi^{\perp}$ the $3 \times 6$ matrices which represent $P_1$ and $P_2$ in the bases $\mathcal{B}_1$ and $\mathcal{B}_2$, respectively, we have, by linear algebra
\begin{equation}\label{inv}
R^{-1} = \left( \begin{array}{c}
\pi^{\parallel} \\
\pi^{\perp} 
\end{array} \right).
\end{equation}

Since $R^{-1}\h = \h' R^{-1}$ (cf. \eqref{R}), we obtain
\begin{equation}\label{comm}
\pi^{\parallel}(\h(g)\bfv) = T_1(g)(\pi^{\parallel}(\bfv)), \quad \pi^{\perp}(\h(g)\bfv) = T_2(g)(\pi^{\perp}(\bfv)),
\end{equation}
for all $g \in \I$ and $\bfv \in \R^6$. In particular, the following diagramme commutes for all $g \in G$:
\begin{equation}\label{diagramma}
\begin{CD}
\R^6 @>\h(g)>> \R^6\\
@VV\pi^{\parallel}V @VV\pi^{\parallel}V\\
E^{\parallel} @>T_1(g)>> E^{\parallel}
\end{CD}
\end{equation}

The set $(\h, \pi^{\parallel})$ is the starting point for the construction of quasicrystals via the cut-and-project method (\cite{senechal}, \cite{paolo}). 

\section{Crystallographic representations of $\I$}\label{cube_section}
From the previous section it follows that the six-dimensional hyperoctahedral group $B_6$ contains all the (minimal) orthogonal crystallographic representations 
of the icosahedral group. In this section we classify them, with the help of the computer software programme \texttt{GAP} \cite{GAP}. 

\subsection{Representations of the hyperoctahedral group $B_6$}
Permutation representations of the $n$-dimensional hyperoctahedral group $B_n$ in terms of elements of $S_{2n}$, the symmetric group of order $(2n)!$, have been described in \cite{baake}. In this subsection we review these results,
since they allow us to generate $B_6$ in \texttt{GAP} and further study its subgroup structure.

It follows from \eqref{B_6} that $B_6$ consists of all the orthogonal integral matrices. A matrix $A=(a_{ij})$ of this kind must satisfy $AA^T = I_6$, the identity matrix of order 6, 
and have integral entries only. It is easy to see that these conditions imply that $A$ has entries in $\{0,\pm 1\}$ and  each row and column contains 1 or $-1$ only once. 
These matrices are called \emph{signed permutation matrices}. It is straightforward to see that any $A \in B_6$ can be written in the form $NQ$, where $Q$ is a $6 \times 6$ 
permutation matrix and $N$ is a diagonal matrix with each diagonal entry being either 1 or -1. We can thus associate with each matrix in $B_6$ a pair $(\bfa, \pi)$, where $\bfa \in \Z_2^6$ is 
a vector given by the diagonal elements of $N$, and $\pi \in S_6$ is the permutation associated with $Q$. The set of all these pairs constitute a group (called the \emph{wreath product} of $\Z_2$ and $S_6$, and denoted by 
$\Z_2 \wr S_6$, \cite{humpreys}) with the multiplication rule given by
\begin{equation*}
(\bfa, \pi)(\bfb, \sigma):=(\bfa_{\sigma}+_2 \bfb, \pi\sigma),
\end{equation*}
where $+_2$ denotes addition modulo 2 and 
\begin{equation*}
(\bfa_{\sigma})_k:=a_{\sigma(k)}, \quad \bfa = (a_1, \ldots, a_6).
\end{equation*}
$\Z_2 \wr S_6$ and $B_6$ are isomorphic, an isomorphism $T$ being the following:
\begin{equation}\label{isomorfismo}
[T(\bfa,\pi)]_{ij}:=(-1)^{a_j}\delta_{i,\pi(j)}.
\end{equation}

It immediately follows that $|B_6| = 2^66! = 46,080$. A set of generators is given by
\begin{equation}\label{generatori}
\alpha:=(\mathbf{0}, (1,2)), \quad \beta:=(\mathbf{0},(1,2,3,4,5,6)), \quad \gamma:=((0,0,0,0,0,1),\text{id}_{S_6}),
\end{equation}
which satisfy the relations 
\begin{equation*}
\alpha^2 = \gamma^2 = \beta^6 = (\mathbf{0},\text{id}_{S_6}).
\end{equation*}

Finally, the function $\phi : \Z_2 \wr S_6 \rightarrow S_{12}$ defined by
\begin{equation}\label{morfismo}
\phi(\bfa, \pi)(k):= \left\{
\begin{aligned}
&\pi(k)+6a_k \quad \text{if} \; 1\leq k \leq 6 \\
&\pi(k-6)+6(1-a_{k-6}) \quad \text{if} \; 7 \leq k \leq 12
\end{aligned}
\right.
\end{equation} 
is injective and maps any element of $\Z_2 \wr S_6$ into a permutation of $S_{12}$, and provides a faithful permutation representation of $B_6$ as a subgroup of $S_{12}$. 
Combining \eqref{isomorfismo} with the inverse of \eqref{morfismo} we get the function
\begin{equation}\label{iso}
\psi:= T \circ \phi^{-1} : S_{12} \rightarrow B_6
\end{equation}
which can be used to map a permutation into an element of $B_6$.

\subsection{Classification}

\begin{figure}
\includegraphics[scale=0.43]{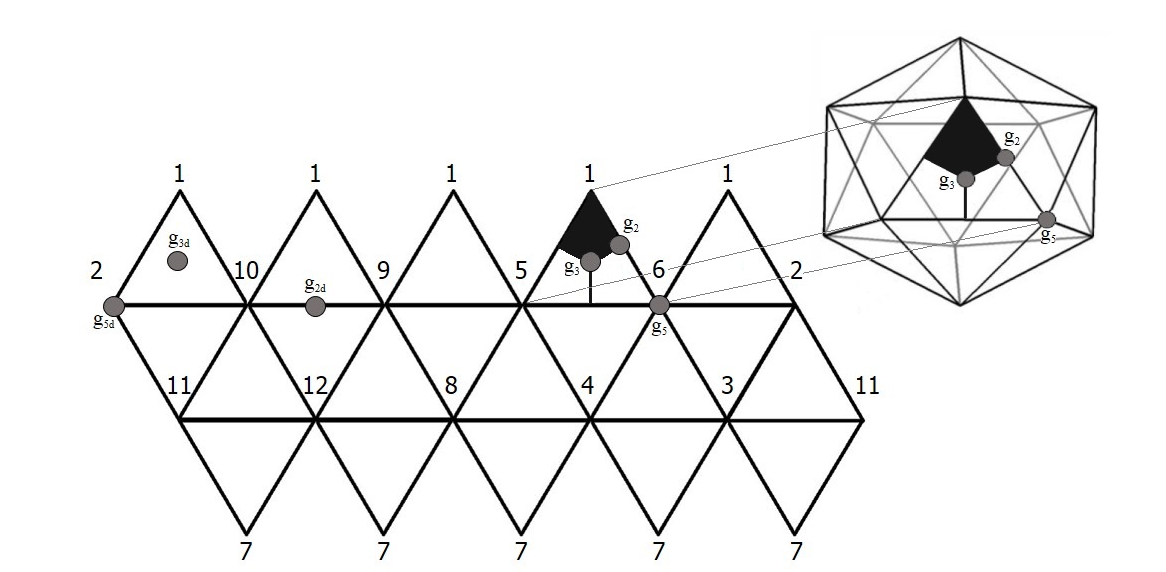}
\caption{A planar representation of an icosahedral surface, showing our labelling convention for the vertices; the dots represent the locations of the symmetry axes corresponding to the generators of the icosahedral group and its subgroups. The kite highlighted is a fundamental domain of the icosahedral group.}
\label{ico}
\end{figure}

In this subsection we classify the orthogonal crystallographic representations of the icosahedral group.  We start by recalling a standard way to construct such a representation, following \cite{zappa}. 
We consider a regular icosahedron and we label each vertex by a number from 1 to 12, so that the vertex opposite to vertex $i$ is labelled by $i+6$ (see Figure \ref{ico}). 
This labelling induces a permutation representation $\sigma : \I \rightarrow S_{12}$ given by
\begin{align*}
\sigma(g_2) &= (1,6)(2,5)(3,9)(4,10)(7,12)(8,11), \\
\sigma(g_3) & = (1,5,6)(2,9,4)(7,11,12)(3,10,8).
\end{align*}
Using \eqref{iso} we obtain a representation $\hat{\I} : \I \rightarrow B_6$ given by 
\begin{equation}\label{gen}
\hat{\I}(g_2) =  \left( \begin{array}{cccccc}
0 & 0 & 0 & 0 & 0 & 1 \\
0 & 0 & 0 & 0 & 1 & 0 \\
0 & 0 & -1 & 0 & 0 & 0 \\
0 & 0 & 0 & -1 & 0 & 0 \\
0 & 1 & 0 & 0 & 0 & 0 \\
1 & 0 & 0 & 0 & 0 & 0 
\end{array} \right), \quad \hat{\I}(g_3) = \left( \begin{array}{cccccc}
0 & 0 & 0 & 0 & 0 & 1 \\
0 & 0 & 0 & 1 & 0 & 0 \\
0 & -1 & 0 & 0 & 0 & 0\\
0 & 0 & -1 & 0 & 0 & 0 \\
1 & 0 & 0 & 0 & 0 & 0 \\
0 & 0 & 0 & 0 & 1 & 0 
\end{array} \right).
\end{equation}

We see that $\chi_{\hat{\I}}(g_2) = -2$ and $\chi_{\hat{\I}}(g_3) = 0$, so that, by looking at the character table of $\I$, we have
\begin{equation*}
 \chi_{\hat{\I}} = \chi_{T_1} + \chi_{T_2},
\end{equation*}
which implies, using Maschke's theorem \cite{fulton}, that $\hat{\I} \simeq T_1 \oplus T_2$ in $GL(6,\R)$. 
Therefore, the subgroup $\hat{\I}$ of $B_6$ is a crystallographic representation of $\I$. 

Before we continue, we recall the following \cite{humpreys}: 
\begin{defn}\label{conjugacy_class} Let $H$ be a subgroup of a group $G$. The \emph{conjugacy class of $H$ in $G$} is the set 
\begin{equation*}
\C_G(H) := \{ gHg^{-1} : g \in G \}.
\end{equation*}
\end{defn}

In order to find all the other crystallographic representations, we use the following scheme:
\begin{enumerate}
\item We generate $B_6$ as a subgroup of $S_{12}$ using \eqref{generatori} and \eqref{morfismo};
\item we list all the conjugacy classes of the subgroups of $B_6$ and find a representative for each class;
\item we isolate the classes whose representatives have order $60$;
\item we check if these representatives are isomorphic to $\I$;
\item we map these subgroups of $S_{12}$ into $B_6$ using \eqref{iso} and isolate the crystallographic ones by checking the characters; denoting by $S$ the representative, we decompose
$\chi_S$ as
\begin{equation*}
 \chi_S = m_1 \chi_{A} + m_2 \chi_{T_1} + m_3 \chi_{T_2} + m_4 \chi_{G} + m_5 \chi_{H}, \quad m_i \in \mathbb{N}, \; i=1,\ldots,5.
\end{equation*}
Note that $S$ is crystallographic if and only if $m_2 = m_3 =1$ and $m_1=m_4=m_5 = 0$.
\end{enumerate}

We implemented steps 1-4 in \texttt{GAP} (see Appendix). There are three conjugacy classes of subgroups isomorphic to $\I$ in $B_6$. 
Denoting by $S_i = < g_{2,i},g_{3,i} >$ the representatives of the classes returned by \texttt{GAP}, we have, using \eqref{iso},
\begin{equation*}
 \chi_{S_1}(g_{2,1}) = 2, \; \chi_{S_1}(g_{3,1}) = 3 \Rightarrow \chi_{S_1} = 2\chi_A + \chi_G \Rightarrow S_1 \simeq 2A \oplus G,  
\end{equation*}
\begin{equation*}
 \chi_{S_2}(g_{2,2}) = -2, \; \chi_{S_2}(g_{3,2}) = 0 \Rightarrow \chi_{S_2} = \chi_{T_1} + \chi_{T_2} \Rightarrow S_2 \simeq T_1 \oplus T_2,  
\end{equation*}
\begin{equation*}
 \chi_{S_3}(g_{2,3}) = 2, \; \chi_{S_3}(g_{3,3}) = 0 \Rightarrow \chi_{S_3} = \chi_A + \chi_H \Rightarrow S_3 \simeq A \oplus H. 
\end{equation*}

Since $2A$ is decomposable into 2 one-dimensional representations, it is not strictly speaking 2D in the sense of Definition \ref{cryst}, and  
as a consequence, only the second class contains the crystallographic representations of $\I$. A computation in \texttt{GAP} shows that its order is 192. We thus have the following
\begin{prop}\label{class}
The crystallographic representations of $\I$ in $B_6$ form a unique conjugacy class in the set of all the classes of subgroups of $B_6$, and its order is equal to 192. 
\end{prop}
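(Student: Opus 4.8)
The plan is to combine a short character-theoretic argument, which pins down the decomposition type of every crystallographic representation, with the \texttt{GAP} enumeration of conjugacy classes, which is what actually controls conjugacy \emph{inside} $B_6$ (rather than inside $GL(6,\R)$) and which yields the cardinality of the class.

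First I would note that any subgroup $\h \leq B_6$ with $\h \simeq \I$ is, by construction, a faithful integral representation, so its character is integer-valued and the first condition of Definition \ref{cryst} holds automatically. Writing $\chi_{\h} = m_1\chi_A + m_2\chi_{T_1} + m_3\chi_{T_2} + m_4\chi_G + m_5\chi_H$ and evaluating on the class $\C_5$ produces a value of the form $(m_1 + m_3 - m_4) + (m_2 - m_3)\tau$; as $\tau$ is irrational, integrality forces $m_2 = m_3$. Since $\I$ has no two-dimensional irreducible representation (its irreducible degrees are $1,3,3,4,5$), the second condition of Definition \ref{cryst} can be met only by containing a three-dimensional irreducible, i.e.\ $m_2 = m_3 \geq 1$; together with $m_1 + 3m_2 + 3m_3 + 4m_4 + 5m_5 = 6$ this forces $m_2 = m_3 = 1$ and $m_1 = m_4 = m_5 = 0$. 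Hence every crystallographic representation has the character of $T_1 \oplus T_2$. As a by-product, discarding the second condition and solving the same system under faithfulness leaves exactly the three types $T_1\oplus T_2$, $A\oplus H$ and $2A\oplus G$, which matches the three classes returned by \texttt{GAP}.

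Next I would use that conjugation in $B_6$ preserves characters, so being crystallographic is a property of a whole conjugacy class rather than of an individual subgroup. By the \texttt{GAP} computation there are exactly three conjugacy classes of subgroups of $B_6$ isomorphic to $\I$, with representatives $S_1,S_2,S_3$ of respective types $2A\oplus G$, $T_1\oplus T_2$ and $A\oplus H$; by the previous paragraph only the class of $S_2$ consists of crystallographic representations, so these form a single conjugacy class. For its cardinality I would invoke the orbit--stabiliser theorem: the class has $[B_6 : N_{B_6}(S_2)]$ elements, and the \texttt{GAP} value $|N_{B_6}(S_2)| = 240$ gives $46080/240 = 192$ (equivalently, one reads the figure $192$ directly off the class list).

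The crux, and the step that genuinely needs \texttt{GAP}, is the passage from equality of characters to conjugacy in $B_6$. The character argument only shows that all crystallographic representations are conjugate in $GL(6,\R)$, since equal real characters imply equivalence of the representations; the proposition, however, asserts conjugacy inside the far smaller group $B_6 = O(6,\Z)$. A priori several distinct $B_6$-classes could share the character of $T_1\oplus T_2$, because inequivalent integral realisations of the same rational character do occur in general; excluding this, and pinning down the exact class size $192$, is precisely the information that the enumeration of subgroup conjugacy classes provides.
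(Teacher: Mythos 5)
Your proposal is correct and follows essentially the same route as the paper: both rest on the \texttt{GAP} enumeration of conjugacy classes of subgroups of $B_6$ isomorphic to $\I$ (three classes, of types $2A\oplus G$, $T_1\oplus T_2$ and $A\oplus H$, with the relevant class of size $192$), combined with a character computation to single out the class of type $T_1\oplus T_2$ as the crystallographic one. Your a priori observation that integrality of the character on $\C_5$ forces $m_2=m_3$, so that only these three faithful six-dimensional types can occur, is a slight tightening of the paper's argument, which instead just evaluates the characters of the three representatives returned by \texttt{GAP}; the essential content and the reliance on the machine computation for both the class count and the cardinality $192$ are the same.
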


We briefly point out that the other two classes of subgroups isomorphic to $\I$ in $B_6$ have an interesting algebraic intepretation. First of all, we observe that $B_6$ is an \emph{extension}
of $S_6$, since according to \cite{humpreys},
\begin{equation*}
B_6 / \Z_2^6 \simeq (\Z_2 \wr S_6) / \Z_2^6 \simeq S_6.
\end{equation*}

Following \cite{rotman}, it is possible to embed the symmetric group $S_5$ into $S_6$ in two different ways. The canonical embedding is achieved by fixing a point in $\{1, \ldots, 6 \}$ and permuting 
the other five, whereas the other embedding is by means of the so-called "exotic map" $\varphi : S_5 \rightarrow S_6$, which acts on the six 5-Sylow subgroups of $S_5$ by conjugation. Recalling that the icosahedral
group is isomorphic to the alternating group $A_5$, which is a normal subgroup of $S_5$, then the canonical embedding corresponds to the representation $2A\oplus G$ in $B_6$, while the exotic one corresponds to the 
representation $A \oplus H$. 

In what follows, we will consider the subgroup $\hat{\I}$ previously defined as a representative of the class of the crystallographic representations of $\I$, and denote this class by $\C_{B_6}(\hat{\I})$. 

Recalling that two representations $D^{(1)}$ and $D^{(2)}$ of a group $G$ are said to be \emph{equivalent} if there are related via a similarity transformation, i.e. there exists an invertible
matrix $S$ such that
\begin{equation*}
D^{(1)} = S D^{(2)} S^{-1},
\end{equation*}
then an immediate consequence of Proposition \ref{class} is the following
\begin{cor}
Let $\h_1$ and $\h_2$ be two orthogonal crystallographic representations of $\I$. Then $\h_1$ and $\h_2$ are equivalent in $B_6$.
\end{cor}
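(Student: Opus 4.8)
The plan is to deduce the statement from Proposition \ref{class} together with the fact that, over a field of characteristic zero, a representation is determined up to equivalence by its character. First I would invoke Proposition \ref{class}: since $\h_1$ and $\h_2$ are orthogonal crystallographic representations of $\I$, their images $\h_1(\I)$ and $\h_2(\I)$ both lie in the unique conjugacy class $\C_{B_6}(\hat{\I})$. Hence there exists $g \in B_6$ with $g\,\h_2(\I)\,g^{-1} = \h_1(\I)$ as subgroups of $B_6$.

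The point that needs care is that conjugacy of the \emph{images} is a priori weaker than equivalence of the \emph{representations}: conjugation by $g$ matches the two subgroups only up to an automorphism of $\I$. Concretely, for each $x \in \I$ the element $g\,\h_2(x)\,g^{-1}$ lies in $\h_1(\I)$, so it equals $\h_1(\theta(x))$ for a unique $\theta(x) \in \I$; one checks that $\theta \in \Aut(\I)$ since it is the composite $\h_1^{-1}\circ(\text{conjugation by }g)\circ\h_2$ of bijective homomorphisms. Thus $g\,\h_2\,g^{-1} = \h_1 \circ \theta$, and the corollary reduces to showing that $\h_1 \circ \theta$ is equivalent to $\h_1$.

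To see this I would compare characters. By definition of a crystallographic representation one has $\chi_{\h_1} = \chi_{T_1} + \chi_{T_2}$, and I claim this class function is invariant under every automorphism of $\I$. Inner automorphisms fix characters automatically. For the outer case I would use that $\I \simeq A_5$ has $\Aut(\I) \simeq S_5$ with $\mathrm{Out}(\I) \simeq \Z_2$, the nontrivial outer class being induced by an odd permutation of $S_5$, which interchanges the two conjugacy classes of $5$-cycles, namely $\C_5$ and $\C_5^2$. Reading off the character table of $\I$, this interchange swaps $\chi_{T_1}$ and $\chi_{T_2}$ (their values on $\C_5$ and $\C_5^2$ are exactly exchanged via $\tau \leftrightarrow 1-\tau$), so it fixes the sum $\chi_{T_1} + \chi_{T_2}$. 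Hence $\chi_{\h_1 \circ \theta} = \chi_{\h_1}$, and since equal characters imply equivalence over $\R$, there is $S \in GL(6,\R)$ with $\h_1 \circ \theta = S\,\h_1\,S^{-1}$. Combining with $g\,\h_2\,g^{-1} = \h_1 \circ \theta$ yields $\h_2 = (g^{-1}S)\,\h_1\,(g^{-1}S)^{-1}$, which is the asserted equivalence.

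The main obstacle is precisely this outer-automorphism step: the icosahedral group does possess a nontrivial outer automorphism, so one cannot simply assert that conjugate subgroups give equivalent representations; the argument goes through only because the crystallographic character $\chi_{T_1} + \chi_{T_2}$ happens to be symmetric under the swap $T_1 \leftrightarrow T_2$. I note in passing that this same observation gives a shortcut bypassing Proposition \ref{class} altogether: any two crystallographic representations share the character $\chi_{T_1} + \chi_{T_2}$ by definition, and are therefore equivalent directly by character theory.
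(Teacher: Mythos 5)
Your proof is correct, but it does considerably more work than the paper, which offers no argument at all: the corollary is presented as an ``immediate consequence'' of Proposition \ref{class}, i.e.\ the paper implicitly identifies equivalence of two representations with conjugacy of their images as subgroups of $B_6$. You are right that this identification is not automatic for $\I \simeq A_5$, precisely because $\mathrm{Out}(A_5) \simeq \Z_2$ is nontrivial: the $g$ supplied by Proposition \ref{class} only gives $g\,\h_2\,g^{-1} = \h_1 \circ \theta$ for some $\theta \in \Aut(\I)$, and your observation that the outer automorphism swaps $\C_5$ and $\C_5^2$, hence exchanges $\chi_{T_1}$ and $\chi_{T_2}$ while fixing their sum, is exactly what closes this gap. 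Your closing remark is also sound: the only six-dimensional representation of $\I$ with integral character containing a three-dimensional irreducible constituent is $T_1 \oplus T_2$, so any two crystallographic representations have equal character and are equivalent over $\R$ by character theory alone, bypassing Proposition \ref{class}. The one point to flag is the phrase ``equivalent \emph{in} $B_6$'': your final conjugator $g^{-1}S$ lies only in $GL(6,\R)$. If the statement is read as asserting that the intertwiner can be chosen inside $B_6$ (which is presumably the authors' intent, since they deduce it from conjugacy in $B_6$), one more step is needed: the normaliser $N_{B_6}(\hat{\I})$ has order $|B_6|/192 = 240$, which strictly exceeds the order $120$ of $\hat{\I}\cdot\{\pm I_6\}$, the subgroup inducing inner automorphisms (the centraliser of $\hat{\I}$ in $B_6$ being $\{\pm I_6\}$ because $T_1 \not\simeq T_2$ and the projectors $P_1 - P_2$ are not integral); hence $N_{B_6}(\hat{\I})$ surjects onto $\mathrm{Out}(\I)$ and $\theta$ can always be absorbed into a $B_6$-conjugation. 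With the paper's literal definition of equivalence (an arbitrary invertible $S$), however, your argument is complete as written.
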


We observe that the determinant of the generators of $\hat{\I}$ in \eqref{gen} is equal to 1, so that $\hat{\I} \in B_6^+:= \{ A \in B_6 : \text{det}A = 1 \}$. Proposition \ref{class} implies that all the
crystallographic representations belong to $B_6^+$. The remarkable fact is that they split into \emph{two} different classes in $B_6^+$. To see this, we first need to generate $B_6^+$. In particular,
with \texttt{GAP} we isolate the subgroups of index $2$ in $B_6$, which are normal in $B_6$, and then, using \eqref{iso}, we find the one  whose generators have determinant equal to 1. In particular, we have
\begin{align*}
B_6^+ = & <(1,2,6,4,3)(7,8,12,10,9),(5,11)(6,12), \\
&  (1,2,6,5,3)(7,8,12,11,9),(5,12,11,6)>.
\end{align*}

We can then apply the same procedure to find the crystallographic representations of $\I$, and see that they split into two classes, each one of size 96. 
Again we can choose $\hat{\I}$ as a representative for one of these classes; a representative $\hat{\K}$ for the other one is given by
\begin{equation}\label{class_2}
\hat{\K} = \left< \left( \begin{array}{cccccc}
0 & 1 & 0 & 0 & 0 & 0 \\
1 & 0 & 0 & 0 & 0 & 0 \\
0 & 0 & -1 & 0 & 0 & 0 \\
0 & 0 & 0 & 0 & 0 & 1 \\
0 & 0 & 0 & 0 & -1 & 0 \\
0 & 0 & 0 & 1 & 0 & 0 
\end{array} \right), \left( \begin{array}{cccccc}
0 & 0 & 0 & 1 & 0 & 0 \\
1 & 0 & 0 & 0 & 0 & 0 \\
0 & 0 & 0 & 0 & 0 & -1 \\
0 & 1 & 0 & 0 & 0 & 0 \\
0 & 0 & -1 & 0 & 0 & 0 \\
0 & 0 & 0 & 0 & 1 & 0 
\end{array} \right) \right>.
\end{equation} 

We note that in the more general case of $\I_h$, we can construct the crystallographic representations of $\I_h$ starting from the crystallographic representations of $\I$. First of all, we recall that $\I_h = I \times C_2$, where $C_2$ is the cyclic group of order 2.  Let $\h$ be a crystallographic representation of $\I$ in $B_6$, and let $\Gamma = \{1,-1 \}$ be a one-dimensional representation of $C_2$. 
Then the representation $\hat{\h}$ given by
\begin{equation*}
\hat{\h} := \h \otimes \Gamma,
\end{equation*}
where $\otimes$ denotes the tensor product of matrices, is a representation of $\I_h$ in $B_6$ and it is crystallographic in the sense of Definition \ref{cryst} \cite{fulton}.

\subsection{Projection into the 3D space}

\begin{table}[!t]
\caption{Explicit forms of the IRs $T_1$ and $T_2$ with $\hat{\I} \simeq T_1 \oplus T_2$.}
\begin{center}
\begin{tabular}{c c c}
Generator & Irrep $\Gamma_1$ & Irrep $\Gamma_2$ \\
\hline
$g_2$ & $\frac{1}{2} \left( \begin{array}{ccc}
\tau-1 & 1 & \tau \\
1 & -\tau & \tau-1 \\
\tau & \tau-1 & -1
\end{array} \right)$ &  $\frac{1}{2} \left( \begin{array}{ccc}
\tau-1 & -\tau & -1 \\
-\tau & -1 & \tau-1 \\
-1 & \tau-1 & -\tau 
\end{array} \right)$ \\
$g_3$  & $\frac{1}{2} \left( \begin{array}{ccc}
\tau & \tau-1 & 1 \\
1-\tau & -1 & \tau \\
1 & -\tau & 1-\tau
\end{array} \right)$ &  $\frac{1}{2} \left( \begin{array}{ccc}
-1 & 1-\tau & -\tau \\
\tau-1 & \tau & -1 \\
\tau & -1 & 1-\tau 
\end{array} \right)$
\end{tabular}
\end{center}
\label{irreps}
\end{table}

We study in detail the projection into the physical space $E^{\parallel}$ using the methods described in Section \ref{proiezione}.

Let $\hat{\I}$ be the crystallographic representation of $\I$ given in \eqref{gen}. Using \eqref{proj} with $n_i = 3$ and $|G| = |\I| = 60$ we obtain the following projection operators
\begin{equation*}
P_1 = \frac{1}{2\sqrt{5}}\left( \begin{array}{cccccc}
\sqrt{5} & 1 & -1 & -1& 1 & 1 \\
1 & \sqrt{5} &1 &-1 &-1 &1 \\
-1& 1 & \sqrt{5} & 1& -1& 1 \\
-1 &-1 &1 &\sqrt{5} & 1 &1 \\
1& -1 & -1 & 1 & \sqrt{5} & 1 \\
1 & 1 & 1 & 1 & 1 & \sqrt{5}
\end{array} \right),
\end{equation*}
\begin{equation*}
P_2 =  \frac{1}{2\sqrt{5}}\left( \begin{array}{cccccc}
\sqrt{5} & -1 & 1 & 1& -1 & -1 \\
-1 & \sqrt{5} &-1 & 1 & 1 & -1 \\
1& -1 & \sqrt{5} & -1& 1& -1 \\
1 & 1 &-1 &\sqrt{5} & -1 &-1 \\
-1 & 1 & 1 & -1 & \sqrt{5} & -1 \\
-1 & -1 & -1 & -1 & -1 & \sqrt{5}
\end{array} \right) .
\end{equation*}

The rank of these operators is equal to 3. We choose as a basis of $E^{\parallel}$ and $E^{\perp}$ the following linear combination of the columns $\cf_{i,j}$ of the projection operators $P_i$, for $i = 1,2$ and $j =1, \ldots, 6$:
\begin{equation*}
\left( 
\underbrace{\frac{\cf_{1,1}+\cf_{1,5}}{2}, \frac{\cf_{1,2}-\cf_{1,4}}{2}, \frac{\cf_{1,3}+\cf_{1,6}}{2}}_{\text{basis of $E^{\parallel}$}}, \underbrace{\frac{\cf_{2,1}-\cf_{2,5}}{2}, \frac{\cf_{2,2}+\cf_{2,4}}{2},\frac{\cf_{2,3}-\cf_{2,6}}{2}}_ {\text{basis of $E^{\perp}$}}
\right).
\end{equation*} 

With a suitable rescaling, we obtain the matrix $R$ given by
\begin{equation*}
R =  \frac{1}{\sqrt{2(2+\tau)}} \left( \begin{array}{cccccc}
\tau & 1 & 0 & \tau & 0 & 1 \\
0 & \tau & 1 & -1 & \tau & 0 \\
-1 & 0 & \tau & 0 & -1& \tau \\
0 & -\tau & 1 & 1 & \tau & 0 \\
\tau & -1 & 0 & -\tau & 0 & 1 \\
1 & 0 & \tau & 0 & -1 & -\tau
\end{array} \right) .
\end{equation*}

The matrix $R$ is orthogonal and reduces $\hat{\I}$ as in \eqref{R}. In Table \ref{irreps} we give the explicit forms of the reduced representation. 
The matrix representation in $E^{\parallel}$ of $P_1$ is given by (see \eqref{inv})

\begin{equation*}
\pi^{\parallel} =  \frac{1}{\sqrt{2(2+\tau)}} \left( \begin{array}{cccccc}
\tau & 0 & -1 & 0 & \tau & 1 \\
1 & \tau & 0 & -\tau & -1 & 0 \\
0 & 1 & \tau & 1 & 0 & \tau 
\end{array} \right).
\end{equation*}

The orbit $\{T_1(\pi^{\parallel}(\bfe_j)) \}$, where $\{\bfe_j, j = 1,\ldots, 6 \}$ is the canonical basis of $\R^6$, represents  a regular icosahedron in 3D centered at the origin (\cite{senechal}, \cite{katz} and \cite{giuliana}).
  
Let $\K$ be another crystallographic representation of $\I$ in $B_6$. By Proposition \ref{class},  $\K$ and $\hat{\I}$ are conjugated in $B_6$. Consider $M \in B_6$ such that $M \hat{\I} M^{-1} = \K$ and let $ S = MR$. We have
\begin{equation*}
S^{-1} \K S = (MR)^{-1} \K (MR) = R^{-1} M^{-1} \K M R = R^{-1} \hat{\I} R = T_1 \oplus T_2. 
 \end{equation*}

Therefore it is possible, with a suitable choice of the reducing matrices, to reduce all the crystallographic representations of $\I$ in $B_6$ into the same irreps.

\section{Subgroup structure}\label{grafi_section}

\begin{table}[!t]
\caption{Non-trivial subgroups of the icosahedral group: $\T$ stands for the tetrahedral group, 
$\D_{2n}$ for the dihedral group of order $2n$, and $C_n$ for the cyclic group of order $n$. }
\begin{center}
\begin{tabular}{c c c c}
Subgroup & Generators & Relations & Order \\
\hline
$\mathcal{T}$ & $g_2, g_{3d}$ & $g_2^2=g_{3d}^3=(g_2g_{3d})^3=e$ & 12 \\
$\mathcal{D}_{10}$ & $g_{2d},g_{5d}$ & $g_{2d}^2=g_{5d}^5=(g_{5d}g_{2d})^2=e$ & 10   \\
$\mathcal{D}_{6}$ & $g_{2d},g_3$ & $g_{2d}^2=g_3^3=(g_3g_{2d})^2=e$ & 6 \\
$C_5$ & $g_{5d}$ & $g_{5d}^5=e$ & 5  \\
$\mathcal{D}_4$ & $g_{2d},g_2$ & $g_{2d}^2=g_2^2=(g_2g_{2d})^2=e$ & 4  \\
$C_3$ & $g_3$ & $g_3^3=e$ & 3 \\
$C_2$ & $g_2$ & $g_2^2=e$ & 2 \\
\end{tabular}
\end{center}
\label{ico_sgp}
\end{table}

\begin{table}[!t]
\caption{Permutation representations of the generators of the subgroups of the icosahedral group.}
\begin{center}
\begin{tabular}{l}
\hline
$\sigma(g_2) = (1,6)(2,5)(3,9)(4,10)(7,12)(8,11)$ \\
$\sigma(g_{2d})  = (1,12)(2,8)(3,4)(5,11)(6,7)(9,10)$ \\
$\sigma(g_3)  = (1,5,6)(2,9,4)(7,11,12)(3,10,8)$ \\
$\sigma(g_{3d})  = (1,10,2)(3,5,12)(4,8,7)(6,9,11)$ \\
$\sigma(g_5)  = (1,2,3,4,5)(7,8,9,10,11)$ \\
$\sigma(g_{5d})  = (1,10,11,3,6)(4,5,9,12,7)$ \\
\hline
\end{tabular}
\end{center}
\label{perm_rep}
\end{table}

\begin{table}[!t]
\caption{Order of the classes of subgroups of the icosahedral group in $\I$ and $B_6$.}
\begin{center}
\begin{tabular}{c c c}
Subgroup & $|\C_{\I}(\G)|$ & $|\C_{B_6}(\K_{\G})|$ \\
\hline
$\T$ & 5 & 480\\ 
$\D_{10}$ & 6 & 576\\
$\D_6$ & 10 & 960\\
$\D_4$ & 5 & 120\\
$C_5$ & 6 & 576\\
$C_3$ & 10 & 320\\
$C_2$ & 15 & 180\\
\hline
\end{tabular}
\end{center}
\label{classi_sgp}
\end{table}

The non-trivial subgroups of $\I$ are listed in Table \ref{ico_sgp}, together with their generators \cite{hoyle}.  Note that $\T$, $\D_{10}$ and $\D_6$ are maximal subgroups of $\I$, and that $\D_4$, $C_5$ and $C_3$ are normal subgroups of $\T$, $\D_{10}$ and $\D_6$, respectively (\cite{humpreys}, \cite{artin}). The permutation representations of the generators in $S_{12}$  are given in Table \ref{perm_rep} (see also Figure \ref{ico}).

Since $\I$ is a small group, its subgroup structure can be easily obtained in \texttt{GAP} by computing explicitly all its conjugacy classes of subgroups. 
In particular, there are $7$ classes of non-trivial subgroups in $\I$: any subgroup $H$ of $\I$ has the property that, if $K$ is another subgroup of $\I$ isomorphic to $H$, then $H$ and $K$ are conjugate in $\I$ (this property is referred to as
the "friendliness'' of the subgroup $H$, \cite{soicher}). In other words, denoting by $n_{\G}$ the number of subgroups of $\I$ isomorphic to $\G$, i.e. 
\begin{equation}\label{ng}
n_{\G} := | \{H < \I : H \simeq \G \}|,
\end{equation}  
we have (cf. Definition \ref{conjugacy_class})
\begin{equation*}
n_{\G} = |\C_{\I}(\G)|.
\end{equation*}

In Table \ref{classi_sgp} we list the order of each class of subgroups in $\I$. Geometrically,  different copies of $C_2$, $C_3$ and $C_5$ correspond to the different two-,three- and five-fold axes of the icosahedron, respectively.  In particular, different copies of $\D_{10}$ stabilise one of the 6 five-fold axes of the icosahedron, and each copy of $\D_6$ stabilises one of the 10 three-fold axes.  
Moreover, it is possible to inscribe 5 tetrahedra into a dodecahedron, and each different copy of the tetrahedral group in $\I$ stabilises
one of these tetrahedra.

\subsection{Subgroups of the crystallographic representations of $\I$}

Let $\G$ be a subgroup of $\I$. The function \eqref{iso} provides a representation of $\G$ in $B_6$, denoted by $\K_{\G}$, which is a subgroup of $\hat{\I}$. 
Let us denote by $\C_{B_6}(\K_{\G})$ the conjugacy class of $\K_{\G}$ in $B_6$. The next lemma shows that this class contains all the subgroups of the crystallographic representations of $\I$ in $B_6$. 

\begin{lemma}
Let $\h_i \in \C_{B_6}(\hat{\I})$ be a crystallographic representation of $\I$ in $B_6$, and let $\K_i \subseteq \h_i$ be a subgroup of $\h_i$ isomorphic to $\G$. Then $\K_i \in \C_{B_6}(\K_{\G})$.
\end{lemma}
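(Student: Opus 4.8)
The plan is to reduce everything to the ``friendliness'' property of the subgroups of $\I$ recorded just above the lemma, by transporting $\K_i$ back inside the fixed representative $\hat{\I}$ using the element of $B_6$ that conjugates $\h_i$ to $\hat{\I}$. The point is that $\K_{\G}$ and the pulled-back copy of $\K_i$ are then two subgroups of the \emph{same} icosahedral group $\hat{\I}$ that are abstractly isomorphic to $\G$, so they must already be conjugate inside $\hat{\I}$; composing this inner conjugation with the outer one keeps us in $B_6$.

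Concretely, I would argue as follows. Since $\h_i \in \C_{B_6}(\hat{\I})$, Definition \ref{conjugacy_class} gives an element $M \in B_6$ with $\h_i = M \hat{\I} M^{-1}$. Because $\K_i \subseteq \h_i$, conjugating by $M^{-1}$ yields $M^{-1}\K_i M \subseteq \hat{\I}$, and this is a subgroup isomorphic to $\G$ (conjugation is an isomorphism, so it preserves the isomorphism type). Now $\hat{\I}$ is a faithful copy of $\I$, so the friendliness property of $\I$ transfers verbatim: any two subgroups of $\hat{\I}$ isomorphic to $\G$ are conjugate \emph{within} $\hat{\I}$. Applying this to $M^{-1}\K_i M$ and to the distinguished copy $\K_{\G} \subseteq \hat{\I}$ furnishes an element $h \in \hat{\I}$ with $h\,(M^{-1}\K_i M)\,h^{-1} = \K_{\G}$.

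It then remains only to unwind the conjugations. From $h M^{-1}\K_i M h^{-1} = \K_{\G}$ we get $\K_i = (M h^{-1})\,\K_{\G}\,(M h^{-1})^{-1}$, and since $h \in \hat{\I} \subseteq B_6$ and $M \in B_6$, the conjugating element $Mh^{-1}$ lies in $B_6$. Hence $\K_i \in \C_{B_6}(\K_{\G})$, as claimed.

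The step I expect to carry the real content is the invocation of friendliness inside $\hat{\I}$: the lemma is essentially the statement that the (purely group-theoretic) friendliness of subgroups of $\I$ survives the passage to conjugacy in the larger group $B_6$, once one has Proposition \ref{class} guaranteeing that all crystallographic representations form a single $B_6$-class. The only subtlety to check carefully is that friendliness is an intrinsic property of the abstract group $\I$ and therefore holds in every isomorphic copy, in particular in $\hat{\I}$; everything else is bookkeeping of conjugating elements, all of which are manifestly in $B_6$.
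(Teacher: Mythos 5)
Your proposal is correct and follows essentially the same route as the paper's own proof: conjugate $\K_i$ into $\hat{\I}$ via the element of $B_6$ realising $\h_i \in \C_{B_6}(\hat{\I})$, invoke the friendliness of subgroups of $\I$ to conjugate the resulting copy onto $\K_{\G}$ inside $\hat{\I}$, and compose the two conjugations. The only difference is notational (your $M$ plays the role of the paper's $g^{-1}$).
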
 
\begin{proof}
Since $\h_i \in C_{B_6}(\hat{\I})$, there exists $g \in B_6$ such that $g\h_ig^{-1} = \hat{\I}$, and therefore $g\K_ig^{-1} = \K'$ is a subgroup of $\hat{\I}$ isomorphic to $\G$. Since all these subgroups 
are conjugate in $\hat{\I}$ (they are "friendly" in the sense of \cite{soicher}), there exists $h \in \hat{\I}$ such that $h\K'h^{-1} = \K_{\G}$. Thus $(hg)\K_i(hg)^{-1} = \K_{\G}$, implying that $\K_i \in \C_{B_6}(\K_{\G})$.
\end{proof}

We next show that every element of $\C_{B_6}(\K_{\G})$ is a subgroup of a crystallographic representation of $\I$.
\begin{lemma}\label{lemma_sgp}
Let $\K_i \in \C_{B_6}(\K_{\G})$. There exists $\h_i \in \C_{B_6}(\hat{\I})$ such that $\K_i$ is a subgroup of $\h_i$.
\end{lemma}
\begin{proof}
Since $\K_i \in \C_{B_6}(\K_{\G})$, there exists $g \in B_6$ such that $g\K_ig^{-1} = \K_{\G}$. We define $\h_i :=g^{-1}\hat{\I} g$. It is immediate to see that $\K_i$ is a subgroup of $\h_i$.
\end{proof}

As a consequence of these Lemmata, $\C_{B_6}(\K_{\G})$ contains all the subgroups of $B_6$ which are isomorphic to $\G$ \emph{and} are subgroups of a crystallographic representation of $\I$. The explict forms of $\K_{\G}$ are given in the Appendix. We point out that it is possible to find subgroups of $B_6$ isomorphic to a subgroup $\G$ of $\I$ which are \emph{not} subgroups of any crystallographic representation of $\I$.  For example, the following subgroup
\begin{equation*}
\hat{\T} = \left< \left( \begin{array}{cccccc}
1 & 0 & 0 & 0 & 0 & 0 \\
0 & -1 & 0 & 0 & 0 & 0 \\
0 & 0 & -1 & 0 & 0 & 0 \\
0 & 0 & 0 & -1 & 0 & 0 \\
0 & 0 & 0 & 0 & -1 & 0 \\
0 & 0 & 0 & 0 & 0 & 1 
\end{array} \right), \left( \begin{array}{cccccc}
0 & 0 & -1 & 0 & 0 & 0 \\
1 & 0 & 0 & 0 & 0 & 0 \\
0 & -1 & 0 & 0 & 0 & 0 \\
0 & 0 & 0 & 0 & 0 & 1 \\
0 & 0 & 0 & 1 & 0 & 0 \\
0 & 0 & 0 & 0 & 1 & 0 
\end{array} \right) \right>.
\end{equation*}
is isomorphic to the tetrahedral group $\T$; a computation in \texttt{GAP} shows that it is not a subgroup of any elements in $\C_{B_6}(\hat{\I})$. Indeed, the two classes of subgroups, $\C_{B_6}(\K_{\T})$ and $\C_{B_6}(\hat{\T})$, are disjoint.

Using \texttt{GAP}, we compute the size of each $\C_{B_6}(\K_{\G})$ (see Table \ref{classi_sgp}). We observe that $|\C_{B_6}(\K_{\G})| < |\C_{B_6}(\hat{\I})| \cdot n_{\G}$.  This implies that crystallographic representations of $\I$ may share subgroups. 
In order to describe more precisely the subgroup structure of $\C_{B_6}(\hat{\I})$ we will use some basic results from graph theory and their spectra, which we are going to recall in the next section. 

\subsection{Some basic results of graph theory and their spectra}

In this section we recall, without proofs, some concepts and results from graph theory and spectral graph theory. Proofs and further results can be found, for example, in \cite{foulds} and \cite{doob}.

Let $G$ be a graph with vertex set $V= \{v_1, \ldots, v_n \}$. The number of edges incident with a vertex $v$ is called the \emph{degree} of $v$. If all vertices have the same degree $d$, then the graph is called \emph{regular of degree $d$}. 
A \emph{walk of length $l$} is a sequence of $l$ consecutive edges, and it is called a \emph{path} if they are all distinct. A \emph{circuit} is a path starting and ending at the same vertex, and the \emph{girth} of the graph is the length of the shortest circuit.
Two vertices $p$ and $q$ are \emph{connected} if there exists a path containing $p$ and $q$. The \emph{connected component} of a vertex $v$ is the set of all vertices connected to $v$.

The \emph{adjacency matrix} $A$ of $G$ is the $n \times n$ matrix $A=(a_{ij})$ whose entries $a_{ij}$ are equal to $1$ 
if the vertex $v_i$ is adjacent to the vertex $v_j$, and $0$ otherwise. It is immediate to see from its definition that $A$ is symmetric and $a_{ii}=0$ for all $i$, so that $\text{Tr}(A) = 0$. 
It follows that $A$ is diagonalisable and all its eigenvalues are real. The \emph{spectrum of the graph} is the set of all the eigenvalues of its adjacency matrix $A$, usually denoted by 
$\sigma(A)$. 

\begin{teo}\label{walks}
Let $A$ be the adjacency matrix of a graph $G$ with vertex set $V = \{v_1, \ldots, v_n \}$. Let $N_k(i,j)$ denote the number of walks of length $k$ starting at vertex $v_i$ and finishing at vertex $v_j$. We have
\begin{equation*}
N_k(i,j) = A^k_{ij}.
\end{equation*}
\end{teo}

We recall that the \emph{spectral radius} of a matrix $A$ is defined by $\rho(A):= \text{max} \{|\lambda| : \lambda \in \sigma(A) \}$. 
If $A$ is a non-negative matrix, i.e. if all its entries are non-negative, then $\rho(A) \in \sigma(A)$ \cite{johnson}. Since the adjacency matrix of a graph is non-negative,
$|\lambda| \leq \rho(A):=r$, where $\lambda \in \sigma(A)$ and $r$ is the largest eigenvalue. $r$ is called the \emph{index} of the graph $G$. 

\begin{teo}\label{regolare}
 Let $\lambda_1, \ldots, \lambda_n$ be the spectrum of a graph $G$, and let $r$ denote its index. Then $G$ is regular of degree $r$ if and only if 
 \begin{equation*}
  \frac{1}{n} \sum_{i=1}^n \lambda_i^2 = r.
 \end{equation*}
Moreover, if $G$ is regular the multiplicity of its index is equal to the number of its connected components.
\end{teo}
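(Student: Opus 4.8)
The plan is to relate the quantity $\frac{1}{n}\sum_i \lambda_i^2$ to the degree sequence of $G$ through the trace of $A^2$, and then to exploit the Rayleigh quotient characterisation of the largest eigenvalue.

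First I would note that, since $A$ is symmetric with real spectrum, $\sum_{i=1}^n \lambda_i^2 = \text{Tr}(A^2)$. On the other hand, because the entries of $A$ lie in $\{0,\pm 1\}$ but are in fact in $\{0,1\}$ for a graph, and $A$ is symmetric, $\text{Tr}(A^2) = \sum_{i,j} A_{ij}A_{ji} = \sum_{i,j} A_{ij}^2 = \sum_{i,j} A_{ij} = \sum_i d_i$, where $d_i$ denotes the degree of $v_i$ (this is the $k=2$ instance of Theorem \ref{walks}, counting closed walks of length two). Writing $\bar d := \frac{1}{n}\sum_i d_i$ for the average degree, the stated identity $\frac{1}{n}\sum_i \lambda_i^2 = r$ is therefore equivalent to $\bar d = r$.

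With this reformulation the forward implication is immediate: if $G$ is regular of degree $r$ then every $d_i = r$, whence $\bar d = r$. For the converse I would use that, $A$ being symmetric, its largest eigenvalue (which by the discussion preceding the theorem is exactly the index $r$) satisfies $r = \max_{\mathbf{v} \neq 0} \frac{\mathbf{v}^T A \mathbf{v}}{\mathbf{v}^T \mathbf{v}}$. Evaluating this Rayleigh quotient at the all-ones vector $\mathbf{v} = (1,\ldots,1)^T$ gives $\frac{\mathbf{v}^T A \mathbf{v}}{n} = \bar d = r$, so $\mathbf{v}$ attains the maximum and must therefore lie in the eigenspace of $r$; that is, $A\mathbf{v} = r\mathbf{v}$. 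Reading this componentwise yields $d_i = r$ for every $i$, so $G$ is regular of degree $r$. The one point needing care is the equality case of the Rayleigh quotient, namely that a maximiser is a top eigenvector: this follows by expanding $\mathbf{v}$ in an orthonormal eigenbasis of $A$ and observing that equality forces all the weight onto eigenvectors with eigenvalue $r$.

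For the last assertion I would reorder the vertices so that $A$ becomes block diagonal, $A = A_1 \oplus \cdots \oplus A_c$, with one block $A_k$ per connected component $G_k$; then $\sigma(A)$ is the union, counted with multiplicity, of the spectra $\sigma(A_k)$. Since $G$ is regular of degree $r$, each component $G_k$ is itself regular of degree $r$, so $r$ is the index of every block. Because $G_k$ is connected, $A_k$ is irreducible, and the Perron--Frobenius theorem \cite{johnson} then guarantees that $r = \rho(A_k)$ is a \emph{simple} eigenvalue of $A_k$. Hence each component contributes the eigenvalue $r$ exactly once, and summing over the $c$ blocks shows that the multiplicity of $r$ in $\sigma(A)$ equals $c$, the number of connected components. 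I expect the simplicity of the Perron root to be the main obstacle, as it is precisely the step where connectedness (irreducibility) is essential and cannot be omitted.
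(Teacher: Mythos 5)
Your proof is correct, but note that the paper does not prove this statement at all: Theorem \ref{regolare} is recalled ``without proofs'' from the spectral graph theory literature (\cite{foulds}, \cite{doob}). Your argument --- reducing $\frac{1}{n}\sum_i\lambda_i^2=r$ to ``average degree equals index'' via $\mathrm{Tr}(A^2)=\sum_i d_i$, settling the converse by the equality case of the Rayleigh quotient at the all-ones vector, and obtaining the multiplicity claim from the block decomposition over components together with the simplicity of the Perron root of each irreducible block --- is exactly the standard proof given in those references, with the equality-case analysis and Perron--Frobenius correctly identified as the two essential ingredients.
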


\subsection{Applications to the subgroup structure}

Let $\G$ be a subgroup of $\I$. In the following we represent the subgroup structure of the class of crystallographic representations of $\I$ in $B_6$, $\C_{B_6}(\hat{\I})$, as a graph. We say that $\h_1, \h_2 \in \C_{B_6}(\hat{\I})$ are 
adjacent to each other (i.e. connected by an edge) in the graph if there exists $P \in \C_{B_6}(\K_{\G})$ such that $P = \h_1 \cap \h_2$. We can therefore consider the graph $ G = (\C_{B_6}(\hat{\I}),E)$, where an edge $e \in E$ 
is of the form $(\h_1,\h_2)$. We call this graph \emph{$\G$-graph}.

Using \texttt{GAP}, we compute the adjacency matrices of the $\G$-graphs. The algorithms used are shown in the Appendix. The spectra of the $\G$-graphs are given in Table \ref{spectra_graphs}. 
We first of all notice that the adjacency matrix of the $C_5$-graph is the null matrix, implying that there are no two representations sharing precisely a subgroup isomorphic to $C_5$, i.e. not a subgroup containing $C_5$. We point out that, since the adjacency matrix of 
the $\D_{10}$-graph is not the null one, then there exist cystallographic representations, say $\h_i$ and $\h_j$, sharing a maximal subgroup isomorphic to $\D_{10}$. Since $C_5$ is a (normal) subgroup of $\D_{10}$, then $\h_i$ and $\h_j$ do share 
a $C_5$ subgroup, but also a $C_2$ subgroup. In other words, if two representations share a five-fold axis, then necessarily they also share a two-fold axis.    

\begin{table}[!t]
\caption{Spectra of the $\G$-graphs for $\G$ a non-trivial subgroup of $\I$ and $\G = \{e\}$, the trivial subgroup consisting of only the identity element $e$. The numbers highlighted are the indices of the graphs, and correspond to their degrees $d_{\G}$.}
\begin{center}
\begin{tabular}{|c|c|c|c|c|c|c|c|}
 \hline
 \multicolumn{2}{|c|}{$\T$-Graph} & \multicolumn{2}{|c|}{$\D_{10}$-Graph} & \multicolumn{2}{|c|}{$\D_6$-Graph} &  \multicolumn{2}{|c|}{$C_5$-graph} \\
 \hline
 Eig. & Mult. & Eig. & Mult. & Eig. & Mult. & Eig. & Mult. \\
 $\mathbf{5}$ & 1 & $\mathbf{6}$ & 6 & $\mathbf{10}$ & 6 & $\mathbf{0}$ & 192\\
 3 & 45 & 2 & 90 & 2 & 90 & & \\
 -3 & 45 & -2 & 90 & -2 & 90 & & \\
 1 & 50 & -6 & 6 & -10 & 6 & &\\ 
 - 1 & 50 & & & & & &\\
 -5 & 1 & & & & & &\\
\hline
 \multicolumn{2}{|c|}{$\D_4$-graph} & \multicolumn{2}{|c|}{$C_3$-graph} & \multicolumn{2}{|c|}{$C_2$-graph} & \multicolumn{2}{|c|}{$\{ e \}$-graph} \\
 \hline
 Eig. & Mult. & Eig & Mult. & Eig. & Mult. & Eig. & Mult. \\
  $\mathbf{30}$ & 1 & $\mathbf{20}$ & 2 & $\mathbf{60}$ & 2 & $\mathbf{60}$ & 1 \\
  18 & 5 & 4 & 90 & 4 & 90 & 12 & 5 \\
 12 & 5 & -4 & 100 & -4 & 90 & 4 & 90 \\
   6 & 15 & & & -12 & 10 & -4 & 90 \\
   2 & 45 & & & & & -12 & 5 \\
   0 & 31 & & & & & -60 & 1 \\
 -2 & 30 & & & & & & \\
 -4 & 45 & & & & & & \\
 -8 & 15 & & & & & & \\
\hline
\end{tabular}
\end{center}
\label{spectra_graphs}
\end{table}

A straightforward calculation based on Theorem \ref{regolare} leads to the following
\begin{prop}\label{reg}
 Let $\G$ be a subgroup of $\I$. Then the corresponding $\G$-graph is regular. 
\end{prop}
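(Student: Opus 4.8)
The plan is to exploit the natural action of $B_6$ on the vertex set $\C_{B_6}(\hat{\I})$ by conjugation and to show that this realises $B_6$ as a \emph{vertex-transitive} group of automorphisms of the $\G$-graph; regularity is then immediate, since any graph admitting a vertex-transitive automorphism group has all its degrees equal.

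First I would record that, by Proposition \ref{class}, the set of crystallographic representations $\C_{B_6}(\hat{\I})$ is a single conjugacy class, so for any two vertices $\h_1,\h_2$ there is a $g\in B_6$ with $g\h_1g^{-1}=\h_2$; thus conjugation acts transitively on the vertices. Next I would check that conjugation preserves adjacency. The key identity is that intersection commutes with conjugation,
\[
g(\h_i\cap\h_j)g^{-1}=(g\h_ig^{-1})\cap(g\h_jg^{-1}),\qquad g\in B_6 .
\]
Hence if $\h_i$ and $\h_j$ are adjacent, i.e. $\h_i\cap\h_j=P$ for some $P\in\C_{B_6}(\K_{\G})$, then $(g\h_ig^{-1})\cap(g\h_jg^{-1})=gPg^{-1}$, which again lies in $\C_{B_6}(\K_{\G})$ because that set is itself a full $B_6$-conjugacy class and so is closed under conjugation. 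Therefore each $g\in B_6$ sends edges to edges and non-edges to non-edges, i.e. is a graph automorphism. Combining transitivity with the automorphism property: given $\h_1,\h_2$ and $g$ with $g\h_1g^{-1}=\h_2$, the map $g$ carries the neighbours of $\h_1$ bijectively onto the neighbours of $\h_2$, so $\deg(\h_1)=\deg(\h_2)$. As $\h_1,\h_2$ were arbitrary, the $\G$-graph is regular.

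An alternative route, closer to the tools just recalled, is to invoke Theorem \ref{regolare} directly: for each $\G$ one reads the spectrum $\lambda_1,\ldots,\lambda_n$ (with $n=|\C_{B_6}(\hat{\I})|=192$) from Table \ref{spectra_graphs} and verifies $\frac{1}{n}\sum_{i=1}^n\lambda_i^2=r$, where $r$ is the highlighted index. For instance, for the $\T$-graph one gets $\frac{1}{192}\bigl(5^2+45\cdot3^2+45\cdot3^2+50\cdot1^2+50\cdot1^2+5^2\bigr)=\frac{960}{192}=5=r$, and the analogous weighted sums for the $\D_{10}$-, $\D_6$-, $\D_4$-, $C_3$-, $C_2$- and $\{e\}$-graphs likewise return their respective indices $6,10,30,20,60,60$; the $C_5$-graph has null adjacency matrix and is trivially regular of degree $0$. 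By Theorem \ref{regolare} each graph is then regular.

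The spectral verification is entirely mechanical once the spectra are known, so its only real cost is the prior \texttt{GAP} computation of the eigenvalues. The conceptual proof has essentially no obstacle either: the single genuinely substantive point to get right is that $\C_{B_6}(\K_{\G})$ must be the \emph{whole} conjugacy class of $\K_{\G}$ in $B_6$, and not merely a conjugation-orbit under $\hat{\I}$, since it is precisely this full $B_6$-invariance that makes adjacency stable under all of $B_6$; this is exactly what the two preceding Lemmata secure. I would therefore favour the automorphism-group argument, keeping the spectral identity only as an independent consistency check against Table \ref{spectra_graphs}.
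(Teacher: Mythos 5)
Your proposal is correct, and your preferred argument is genuinely different from the paper's. The paper proves Proposition \ref{reg} exactly by the second route you sketch: it reads the spectra from Table \ref{spectra_graphs} (computed in \texttt{GAP}) and applies the criterion of Theorem \ref{regolare}, i.e.\ checks $\frac{1}{n}\sum_i\lambda_i^2=r$ for each $\G$; your sample verification for the $\T$-graph ($960/192=5$) is the kind of ``straightforward calculation'' the paper invokes. Your main argument --- that conjugation by $B_6$ acts transitively on the vertex set $\C_{B_6}(\hat{\I})$ (Proposition \ref{class}) and preserves adjacency because $g(\h_i\cap\h_j)g^{-1}=(g\h_ig^{-1})\cap(g\h_jg^{-1})$ and $\C_{B_6}(\K_{\G})$ is a full $B_6$-conjugacy class, hence closed under conjugation --- is sound and establishes vertex-transitivity, which is strictly stronger than regularity. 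You are also right to flag that the adjacency-preservation step hinges on $\C_{B_6}(\K_{\G})$ being the whole $B_6$-class rather than an $\hat{\I}$-orbit. What each approach buys: your conceptual argument is a priori, requires no computation, and explains \emph{why} regularity holds (it would survive any change to the ambient group or subgroup $\G$); the paper's spectral route is purely mechanical but, as a by-product, delivers the actual degrees $d_{\G}$ and (via the multiplicity of the index) the number of connected components, both of which the paper needs for Propositions \ref{sgp_rep} and \ref{reps}. So the two are complementary rather than competing, and keeping the spectral identity as a consistency check, as you suggest, is reasonable.
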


In particular, the degree $d_{\G}$ of each $\G$-graph is equal to the largest eigenvalue of the corresponding spectrum. As a consequence we have the following
\begin{prop}\label{sgp_rep}
Let $\h$ be a crystallographic representation of $\I$ in $B_6$. Then there are exactly $d_{\G}$ representations $\K_j \in \C_{B_6}(\hat{\I})$ such that 
\begin{equation*}
\h \cap \K_j = P_j, \quad \exists \; P_j \in \C(\K_{\G}), \quad j = 1, \ldots, d_{\G}.
\end{equation*}
In particular, we have $d_{\G} = 5,6,10,0,30,20,60$ and $60$ for $\G = \T,\D_{10},\D_6$, $C_5, \D_4, C_3, C_2$ and $\{e \}$, respectively.  
\end{prop}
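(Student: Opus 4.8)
The plan is to recognise that the proposition is a direct combinatorial consequence of the regularity already established in Proposition \ref{reg}, combined with the identification of the degree of a regular graph with its index. First I would unwind the definition of the $\G$-graph: its vertices are the elements of $\C_{B_6}(\hat{\I})$, and $\h$ is joined to $\K_j$ by an edge precisely when $\h \cap \K_j \in \C_{B_6}(\K_{\G})$, that is, when $\h \cap \K_j = P_j$ for some $P_j \in \C_{B_6}(\K_{\G})$. Under this dictionary, the set of representations $\K_j$ described in the statement is exactly the set of neighbours of the vertex $\h$, so the number to be determined is precisely the degree of $\h$ in the $\G$-graph.

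Next I would confirm that this degree really counts the $\K_j$ without repetition or omission, i.e. that the $\G$-graph is simple. There are no loops at $\h$, since a loop would force $\h = \h \cap \h \in \C_{B_6}(\K_{\G})$, whereas $\h \simeq \I$ and every member of $\C_{B_6}(\K_{\G})$ is isomorphic to the \emph{proper} subgroup $\G$; and there are no multiple edges, because for a fixed pair the intersection $\h \cap \K_j$ is a single well-defined subgroup, contributing at most one edge. Hence the number of $\K_j$ in the statement equals the degree of the vertex $\h$, and by Proposition \ref{reg} this degree is the same constant $d_{\G}$ for every $\h \in \C_{B_6}(\hat{\I})$.

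Finally I would pin down the numerical values. By Theorem \ref{regolare} a graph that is regular is regular of degree equal to its index, so for each $\G$ the common degree $d_{\G}$ equals the largest eigenvalue of the adjacency matrix of the $\G$-graph. Reading the highlighted indices from Table \ref{spectra_graphs} then gives $d_{\G} = 5, 6, 10, 0, 30, 20, 60$ and $60$ for $\G = \T, \D_{10}, \D_6, C_5, \D_4, C_3, C_2$ and $\{e\}$, respectively. The proof is thus very short, as all the substantive work lies in Proposition \ref{reg} and in the spectral computations of Table \ref{spectra_graphs}. The only point needing genuine care — and the nearest thing to an obstacle — is the verification that the combinatorial degree faithfully enumerates the representations named in the statement, which amounts to the simplicity argument above; granting that, the proposition follows immediately.
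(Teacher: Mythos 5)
Your proposal is correct and follows essentially the same route as the paper, which presents Proposition \ref{sgp_rep} as an immediate consequence of the regularity established in Proposition \ref{reg} together with the identification of the degree $d_{\G}$ with the index read off from Table \ref{spectra_graphs}. Your additional check that the $\G$-graph is simple (no loops since $\G$ is a proper subgroup, no multiple edges since the intersection is a single subgroup) is a small point the paper leaves implicit, but it does not change the argument.
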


In particular, this means that for any crystallographic representation of $\I$ there are precisely $d_{\G}$ other such representations which share a subgroup isomorphic to $\G$. In other words, 
we can associate to the class $\C_{B_6}(\hat{\I})$ the "subgroup matrix" $S$ whose entries are defined by
\begin{equation*}
S_{ij} = |\h_i \cap \h_j|, \qquad i,j =1,\ldots, 192.
\end{equation*}

The matrix $S$ is symmetric and $S_{ii} = 60$, for all $i$, since the order of $\I$ is 60. It follows from Proposition \ref{sgp_rep} that each row of $S$ contains $d_{\G}$ entries equal to $|\G|$. Moreover, a rearrangement of the columns of $S$  shows that the 192 crystallographic representations of $\I$ can be grouped into 12 sets of 16 such that any two of these representations in such a set of 16 share a $\D_4$-subgroup. This implies that the corresponding subgraph of the $\D_4$-graph is a \emph{complete graph}, i.e. every two distinct vertices are connected by an edge. 
From a geometric point of view, these 16 representations correspond to "6-dimensional icosahedra". This ensemble of 16 such icosahedra embedded into a six-dimensional hypercube can be viewed as 6D analogue of the 3D ensemble of five tetrahedra inscribed into a dodecahedron, sharing pairwise a $C_3$-subgroup.

We notice that, using Theorem \ref{regolare}, not all the graphs are connected. In particular, the $\D_{10}$- and the $\D_6$-graphs are made up of six connected components, whereas the $C_3$- and the $C_2$-graphs consist of two connected components.
With \texttt{GAP}, we implemented a  \emph{breadth-first search} (BFS) algorithm \cite{foulds}, which starts from a vertex $i$ and then ``scans'' for all the vertices connected to it, which allows us to find the connected components of a given $\G$-graph (see Appendix). 
We find that each connected component of the $\D_{10}$- and the $\D_6$-graphs is made up of $32$ vertices, while for the $C_3$ and $C_2$ each component consists of $96$ vertices.  For all other subgroups, the corresponding $\G$-graph is connected and the connected component contains trivially 192 vertices. 

We now consider in more detail the case when $\G$ is a maximal subgroup of $\I$. Let $\h \in \C_{B_6}(\hat{\I})$ and let us consider its vertex star in the corresponding $\G$-graph, i.e.
 \begin{equation}\label{vertex_star}
  V(\h) := \{ \K \in \C_{B_6}(\hat{\I}) : \K \; \text{is adjacent to} \; \h \}. 
 \end{equation}

A comparison of Tables \ref{ico_sgp} and \ref{spectra_graphs} shows that $d_{\G} = n_{\G}$ (i.e. the number of subgroups isomorphic to $\G$ in $\I$, cf. \eqref{ng}) and therefore, since the graph is regular, $|V(\h)| = d_{\G} = n_{\G}$. 
This suggests that there is a 1-1 correspondence between elements of the vertex star of $\h$ and subgroups of $\h$ isomorphic to $\G$; in other words, if we fix any subgroup $P$ of $\h$ isomorphic to $\G$, then $P$ "connects" $\h$ with exactly another representation $\K$.  
We thus have the following
\begin{prop}\label{reps}
 Let $\G$ be a maximal subgroup of $\I$. Then for every $P \in \C_{B_6}(\K_{\G})$ there exist \emph{exactly} two crystallographic representations of $\I$, $\h_1, \h_2 \in \C_{B_6}(\hat{\I})$,  such that $P = \h_1 
 \cap \h_2$.
\end{prop}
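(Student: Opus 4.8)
The plan is to reduce the statement to a count of how many crystallographic representations contain a fixed $P$, and then to control that count by exploiting that $\C_{B_6}(\K_{\G})$ is a \emph{single} $B_6$-conjugacy class. For $P \in \C_{B_6}(\K_{\G})$ I set $m(P) := |\{\h \in \C_{B_6}(\hat{\I}) : P \subseteq \h\}|$. The proposition will follow once I establish (i) $m(P) = 2$ for every such $P$, and (ii) that for the two representations $\h_1, \h_2$ containing $P$ one actually has $\h_1 \cap \h_2 = P$ rather than a larger subgroup. Indeed, any $\h, \h'$ realising $P = \h \cap \h'$ automatically lie in the set counted by $m(P)$ and are distinct (as $P \neq \h$), so (i) and (ii) together pin down exactly two such representations.

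First I would fix the value of $m$ by a double count of the incidence pairs $(\h, P)$ with $\h \in \C_{B_6}(\hat{\I})$, $P \in \C_{B_6}(\K_{\G})$ and $P \subseteq \h$. Summing over $\h$: each $\h$ is isomorphic to $\I$, so by the friendliness of $\G$ it contains exactly $n_{\G} = |\C_{\I}(\G)|$ subgroups isomorphic to $\G$, and by the first Lemma of this subsection these are precisely the members of $\C_{B_6}(\K_{\G})$ lying inside $\h$. Hence there are $|\C_{B_6}(\hat{\I})| \cdot n_{\G} = 192\, n_{\G}$ pairs. Summing over $P$ instead gives $\sum_P m(P)$. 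The structural input I would use is that $m$ is constant on $\C_{B_6}(\K_{\G})$: conjugation by any $g \in B_6$ permutes the $B_6$-class $\C_{B_6}(\hat{\I})$ and satisfies $P \subseteq \h \iff gPg^{-1} \subseteq g\h g^{-1}$, so $m(gPg^{-1}) = m(P)$, and $B_6$ acts transitively on the single class $\C_{B_6}(\K_{\G})$. Therefore $m(P) \equiv 192\, n_{\G}/|\C_{B_6}(\K_{\G})|$, which by the orders in Table \ref{classi_sgp} equals $960/480 = 1152/576 = 1920/960 = 2$ for $\G = \T, \D_{10}, \D_6$ respectively, proving (i).

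For (ii), let $\h_1, \h_2$ be the two representations containing $P$; they are distinct since $m(P) = 2$. Then $P \subseteq \h_1 \cap \h_2 \subseteq \h_1$. Because $\h_1 \cong \I$ and $P \cong \G$ is a maximal subgroup, $P$ is a maximal subgroup of $\h_1$, so $\h_1 \cap \h_2$ is either $P$ or all of $\h_1$. The latter would force $\h_1 \subseteq \h_2$, hence $\h_1 = \h_2$ as both have order $60$, contradicting distinctness; thus $\h_1 \cap \h_2 = P$. Finally, any pair of representations realising $P$ as their intersection lies in $\{\h_1, \h_2\}$ and consists of two distinct elements, hence equals $\{\h_1, \h_2\}$, giving exactly two.

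The step I expect to be most delicate is the constancy of $m$, that is, upgrading the average value $2$ from the double count to the pointwise statement $m(P) = 2$ for \emph{every} $P$; this is precisely where transitivity of the $B_6$-conjugation action on the single class $\C_{B_6}(\K_{\G})$ does the work, and it is the reason the fact that $\C_{B_6}(\K_{\G})$ is one conjugacy class (rather than several) is essential. The maximality of $\G$ is then needed only to convert ``contains $P$'' into ``intersects in exactly $P$'': for a non-maximal $\G$ the intersection $\h_1 \cap \h_2$ could properly contain $P$, which is why the clean statement is restricted to the maximal case.
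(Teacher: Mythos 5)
Your proof is correct, and it takes a genuinely different route from the paper. The paper's argument is graph-theoretic: it first establishes that the $\G$-graph is regular of degree $d_{\G}$ (from the computed spectra), observes that $d_{\G}=n_{\G}$ for the maximal subgroups, and proves a separate lemma that the $\T$-, $\D_{10}$- and $\D_6$-graphs are triangle-free (by checking computationally that the diagonal of $A_{\G}^3$ vanishes); the statement then follows from a pigeonhole argument on the vertex star of $\h_1$. You instead double-count incidences $(\h,P)$ with $P\subseteq\h$, use the two Lemmata of the subsection to identify the subgroups of the $\h$'s isomorphic to $\G$ with the members of $\C_{B_6}(\K_{\G})$, and use transitivity of $B_6$-conjugation on the single class $\C_{B_6}(\K_{\G})$ to upgrade the average $192\,n_{\G}/|\C_{B_6}(\K_{\G})|=2$ to a pointwise count; maximality of $P$ in $\h_1$ then converts ``contained in exactly two representations'' into ``is exactly their intersection.'' Your version needs only the class sizes in Table \ref{classi_sgp} (no adjacency matrices, spectra, or cubes thereof), it delivers the uniqueness half of ``exactly two'' more explicitly than the paper's proof does, and the same computation applied to non-maximal $\G$ still yields the number of representations \emph{containing} a given $P$ (e.g.\ $2$ for $C_5$, $8$ for $\D_4$), cleanly separating that count from the question of whether the intersection equals $P$ --- which is exactly where maximality enters. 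The trade-off is that the paper's graph-theoretic framework (regularity, connectivity, triangle-freeness) is developed for its own sake and reused elsewhere, whereas your argument is self-contained but special to this proposition.
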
 
In order to prove it, we first need the following lemma

\begin{lemma}\label{triangle} 
Let $\G$ be a maximal subgroup of $\I$. Then the corresponding $\G$-graph is triangle-free, i.e. it has no circuits of length three. 
\end{lemma}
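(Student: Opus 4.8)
The plan is to count triangles spectrally and show the count is zero. A triangle is a closed walk of length three, so I would first apply Theorem \ref{walks} with $k=3$: the number of walks of length three from a vertex $v_i$ back to itself equals $(A^3)_{ii}$, where $A$ is the adjacency matrix of the $\G$-graph. Because the graph is simple (so $a_{ii}=0$ for all $i$), any closed walk $v_i \to v_a \to v_b \to v_i$ of length three forces the three indices $i,a,b$ to be pairwise distinct and hence to span a triangle; conversely, each triangle is traversed in exactly $6$ such ways (three choices of base vertex, two orientations). Summing over $i$ therefore gives
\begin{equation*}
6\cdot\#\{\text{triangles}\} \;=\; \sum_{i}(A^3)_{ii} \;=\; \text{Tr}(A^3) \;=\; \sum_{i=1}^{192}\lambda_i^3,
\end{equation*}
where $\lambda_1,\ldots,\lambda_{192}$ denotes the spectrum of the $\G$-graph.

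The key observation is then that, for each of the three maximal subgroups $\G=\T,\D_{10},\D_6$, the spectrum recorded in Table \ref{spectra_graphs} is symmetric about the origin: every eigenvalue $\lambda$ occurs with the same multiplicity as $-\lambda$ (namely $\{\pm5,\pm3,\pm1\}$ for $\T$, $\{\pm6,\pm2\}$ for $\D_{10}$, and $\{\pm10,\pm2\}$ for $\D_6$, with matching multiplicities). Consequently the odd power sum $\sum_i\lambda_i^3$ cancels in $\pm$ pairs and vanishes, so $\text{Tr}(A^3)=0$ and the number of triangles is zero. This settles all three maximal subgroups simultaneously, and completes the proof modulo the spectra computed with \texttt{GAP}.

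I expect the genuine content — and the main obstacle if one wants an argument independent of the computed spectrum — to be the symmetry of the spectrum itself, which is equivalent to the assertion that each of these $\G$-graphs is \emph{bipartite}. A conceptual route would be to exhibit the bipartition directly: since every crystallographic representation lies in $B_6^+$, and these split into the two $96$-element conjugacy classes represented by $\hat{\I}$ and $\hat{\K}$ in \eqref{class_2}, I would try to show that whenever $\h_1\cap\h_2\simeq\G$ with $\G$ maximal the two representations $\h_1,\h_2$ lie in \emph{opposite} classes; this would force each $\G$-graph (restricted to each connected component) to be bipartite, and triangle-freeness would then follow since a bipartite graph has no odd circuits. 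Establishing this opposite-class property rigorously is the delicate step. A supporting sanity check is the double count $192\,n_{\G}=|\C_{B_6}(\K_{\G})|\cdot t$ for the number $t$ of representations containing a fixed $\G$-subgroup, which together with the class sizes of Table \ref{classi_sgp} forces $t=2$ for each maximal $\G$; this rules out the degenerate triangle in which all three shared subgroups coincide, and is consistent with each edge corresponding to a single shared subgroup, as the bipartite picture predicts.
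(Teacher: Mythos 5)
Your argument is correct, and its core is the same as the paper's: both reduce triangle-freeness to the vanishing of the diagonal of $A_{\G}^3$ via Theorem \ref{walks}. The only genuine difference is how that vanishing is certified. The paper computes $A_{\G}^3$ in \texttt{GAP} and checks $(A_{\G}^3)_{ii}=0$ entrywise; you instead note that for $\G=\T,\D_{10},\D_6$ the spectra in Table \ref{spectra_graphs} are symmetric about the origin, so that $6\cdot\#\{\mathrm{triangles}\}=\mathrm{Tr}(A_{\G}^3)=\sum_i\lambda_i^3=0$. This is a small but real gain: it derives the lemma from data already displayed in the paper rather than from a further unpublished matrix computation, and it explains \emph{why} the count vanishes (a symmetric spectrum means all odd closed-walk counts vanish, i.e.\ the graph is bipartite, so \emph{all} odd circuits are excluded, not just triangles). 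It does not, however, remove the reliance on machine computation, since the spectra themselves come from \texttt{GAP}. Your proposed conceptual route is also on target: the opposite-class property you conjecture is precisely what the paper later records as Proposition \ref{class2} (itself obtained computationally, by checking that the $\G$-graphs restricted to the classes $X_1,X_2$ in $B_6^+$ have empty edge sets), and granting it would indeed yield bipartiteness directly. Finally, your double count $192\,n_{\G}=|\C_{B_6}(\K_{\G})|\cdot t$ forcing $t=2$ is correct and is essentially the content of Proposition \ref{reps}, but as you rightly observe it only excludes the degenerate triangle with a common shared subgroup, so it cannot substitute for the spectral step.
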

\begin{proof}
Let $A_{\G}$ be the adjacency matrix of the $\G$-graph. By Theorem \ref{walks}, its third power $A_{\G}^3$ determines the number of walks of length 3, and in particular its diagonal entries, $(A^3_{\G})_{ii}$, for $i = 1, \ldots, 192$, correspond to the 
number of triangular circuits starting and ending in vertex $i$. A direct computation shows that $(A^3_{\G})_{ii} = 0$, for all $i$, thus implying the non-existence of triangular circuits in the graph.
\end{proof}

\begin{proof}[Proof of Proposition \ref{reps}] 
 If $P \in \C_{B_6}(\K_{\G})$, then, using Lemma \ref{lemma_sgp}, there exists $\h_1 \in \C_{B_6}(\hat{\I})$ such that $P$ is a subgroup of $\h_1$. Let us consider the vertex star $V(\h_1)$. We have $|V(\h_1)| = d_{\G}$; we call its elements $\h_2, \ldots, \h_{d_{\G}+1}$.
Let us suppose that $P$ is not a subgroup of any $\h_j$, for $j=2, \ldots, d_{\G}+1$. This implies that $P$ does not connect $\h_1$ with any of these $\h_j$. However, since $\h_1$ has exactly $n_{\G}$ different subgroups isomorphic to $\G$, then at least two vertices in the vertex star, say $\h_2$ and $\h_3$, are connected by the same subgroup isomorphic to $\G$, which we denote by $Q$. Therefore we have
\begin{equation*}
Q = \h_1 \cap \h_2, \quad Q = \h_1 \cap \h_3 \Rightarrow Q = \h_2 \cap \h_3.
\end{equation*}
This implies that $\h_1$, $\h_2$ and $\h_3$ form a triangular circuit in the graph, which is a contradiction due to Lemma \ref{triangle}, hence the result is proved.
\end{proof} 

It is noteworthy that the situation in $B_6^+$ is different. If we denote by $X_1$ and $X_2$ the two disjoint classes of crystallographic representations of $\I$ in $B_6^+$ (cf. \eqref{class_2}), we can build, in the
same way as described before, the $\G$-graphs for $X_1$ and $X_2$, for $\G = \T, \D_{10}$ and $\D_6$. The result is that the adjacency matrices of all these 6 graphs are the null matrix of dimension 96. This implies that these graphs 
have no edges, and so the representations in each class do not share any maximal subgroup of $\I$. As a consequence, we have the following:
\begin{prop}\label{class2}
Let $\h,\K \in \C_{B_6}(\hat{\I})$ be two crystallographic representations of $\I$, and $P = \h \cap \K$, $P \in \C_{B_6}(\K_{\G})$, where $\G$ is a maximal subgroup of $\I$. Then $\h$ and $\K$ are not conjugated in $B_6^+$.  In other words, the elements of $B_6$ which conjugate $\h$ with $\K$ are matrices with determinant equal to $-1$. 
\end{prop}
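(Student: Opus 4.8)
The plan is to deduce the statement from the computational observation recorded just above, namely that for each maximal subgroup $\G$ of $\I$ the $\G$-graph built on $X_1$ (and likewise on $X_2$) has null adjacency matrix. The essential point is that $X_1$ and $X_2$ are \emph{exactly} the two conjugacy classes into which $\C_{B_6}(\hat{\I})$ splits in $B_6^+$; hence two crystallographic representations are conjugate in $B_6^+$ if and only if they lie in the same $X_i$. In this way the geometric content ``no two representations in a single class share a maximal subgroup'' translates directly into the desired conjugacy obstruction.

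First I would argue by contradiction that $\h$ and $\K$ are not conjugate in $B_6^+$. Suppose they were. Then $\h$ and $\K$ belong to the same class, say $\h,\K \in X_i$. By hypothesis $P = \h \cap \K$ lies in $\C_{B_6}(\K_{\G})$ with $\G$ a maximal subgroup of $\I$, so by the definition of the $\G$-graph the vertices $\h$ and $\K$ are adjacent in the $\G$-graph restricted to $X_i$. This contradicts the fact that this graph has no edges (its adjacency matrix is the null matrix of dimension $96$). Hence no element of $B_6^+$ conjugates $\h$ to $\K$.

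It then remains to translate this into the determinant statement. By Proposition \ref{class} all crystallographic representations of $\I$ form a single conjugacy class in $B_6$, so $\h$ and $\K$ are conjugate in $B_6$: fix $M_0 \in B_6$ with $M_0 \h M_0^{-1} = \K$. The set of all elements of $B_6$ conjugating $\h$ to $\K$ is the coset $M_0\, N_{B_6}(\h)$, where $N_{B_6}(\h)$ is the normaliser of $\h$ in $B_6$. If some element of this coset had determinant $+1$, it would lie in $B_6^+$ and would conjugate $\h$ to $\K$ inside $B_6^+$, contradicting the previous paragraph. Therefore every conjugating element has determinant $-1$; equivalently $N_{B_6}(\h) \subseteq B_6^+$ and $\det M_0 = -1$, which is the ``in other words'' reformulation.

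I expect the only genuine content to be the computational input that the six restricted $\G$-graphs are edgeless for $\G \in \{\T, \D_{10}, \D_6\}$; everything else is formal group theory. The main obstacle in a computation-free treatment would be to explain structurally why two representations in the same $B_6^+$-class cannot share a maximal subgroup $P$ --- presumably by examining the determinant of the element of $N_{B_6}(P)$ that interchanges the (by Proposition \ref{reps}, exactly two) representations containing $P$, and showing that this element must have determinant $-1$.
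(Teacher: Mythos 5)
Your argument is correct and follows essentially the same route as the paper: the paper derives the proposition directly from the computational fact that the six $\G$-graphs built on $X_1$ and $X_2$ (for $\G = \T, \D_{10}, \D_6$) have null adjacency matrices, so that representations in the same $B_6^+$-class never share a maximal subgroup. Your additional bookkeeping --- identifying $X_1, X_2$ as the $B_6^+$-conjugacy classes and describing $A_{\h,\K}$ as the coset $M_0 N_{B_6}(\h)$ to extract the determinant statement --- merely makes explicit what the paper leaves implicit.
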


We conclude by showing a computational method which combines the result of Propositions \ref{class} and \ref{sgp_rep}. We first recall the following
\begin{defn}
Let $H$ be a subgroup of a group $G$. The \emph{normaliser} of $H$ in $G$ is given by
\begin{equation*}
N_G(H) := \{ g \in G : gHg^{-1} = H \}.
\end{equation*}
\end{defn}

\begin{cor}
Let $\h$ and $\K$ be two crystallographic representations of $\I$ in $B_6$, and $P \in \C(\K_{\G})$ such that $P = \h \cap \K$. Let 
\begin{equation*}
A_{\h,\K} = \{ M \in B_6 : M\h M^{-1} = \K \}
\end{equation*}
be the set of all the elements of $B_6$ which conjugate $\h$ with $\K$, and let $N_{B_6}(P)$ be the normaliser of $P$ in $B_6$. We have
\begin{equation*}
A_{\h,\K} \cap N_{B_6}(H) \neq \varnothing.
\end{equation*}
In other words, it is possible to find a non-trivial element $M \in B_6$  in the normaliser of $P$ in $B_6$ which conjugates $\h$ with $\K$.  
\end{cor}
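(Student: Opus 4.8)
The plan is to leverage Proposition \ref{class}, which guarantees that $\h$ and $\K$ belong to the single conjugacy class $\C_{B_6}(\hat{\I})$, so that $A_{\h,\K}$ is automatically nonempty; the entire task is then to \emph{correct} an arbitrary conjugator by a suitable element of $\K$ so that it simultaneously normalises $P$. Concretely, I would first fix any $M_0 \in A_{\h,\K}$, so that $M_0 \h M_0^{-1} = \K$, and then track the behaviour of the intersection $P = \h \cap \K$ under conjugation by $M_0$.

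The key observation is that $P$ sits inside $\K$ in two different ways. On one hand, $P = \h \cap \K \subseteq \K$ is a subgroup of $\K$ isomorphic to $\G$. On the other hand, since $P \subseteq \h$, its image $M_0 P M_0^{-1}$ is a subgroup of $M_0 \h M_0^{-1} = \K$, again isomorphic to $\G$. Thus $P$ and $M_0 P M_0^{-1}$ are two subgroups of $\K$ of the same isomorphism type. Because $\K \simeq \I$, the ``friendliness'' of the subgroups of $\I$ (the property already invoked in the first Lemma of this section) applies inside $\K$: any two of its subgroups isomorphic to $\G$ are conjugate in $\K$. Applying this to $P$ and $M_0 P M_0^{-1}$ produces an element $h \in \K$ with $h\,(M_0 P M_0^{-1})\,h^{-1} = P$. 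I would then set $M := h M_0$ and verify the two defining properties directly: first, $M \h M^{-1} = h\,(M_0 \h M_0^{-1})\,h^{-1} = h \K h^{-1} = \K$ since $h \in \K$, so $M \in A_{\h,\K}$; second, $M P M^{-1} = h\,(M_0 P M_0^{-1})\,h^{-1} = P$, so $M \in N_{B_6}(P)$. Hence $M \in A_{\h,\K} \cap N_{B_6}(P)$, and this element is non-trivial because $\h \neq \K$ (indeed $P$ is a proper subgroup of each, so neither $M_0$ nor $M$ can be the identity).

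I do not expect any genuine obstacle: the argument is a one-line adjustment of the conjugator, and the only inputs beyond Proposition \ref{class} are the friendliness property and the elementary fact that conjugation preserves both the subgroup relation and the isomorphism type. The one point requiring care is that the correcting element $h$ must be chosen \emph{inside} $\K$ rather than in $B_6$ at large; this is exactly what ensures that left-multiplying $M_0$ by $h$ repositions $M_0 P M_0^{-1}$ back onto $P$ without disturbing the relation $M_0 \h M_0^{-1} = \K$, since $h \K h^{-1} = \K$.
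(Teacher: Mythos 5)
Your proof is correct, but it takes a genuinely different route from the paper. The paper argues by contradiction in two lines: assuming $A_{\h,\K} \cap N_{B_6}(P) = \varnothing$, it asserts that $MPM^{-1} \neq P$ for all $M \in A_{\h,\K}$ implies $P$ is not a subgroup of $\K$, a contradiction. As written, that implication is a non sequitur: $MPM^{-1}$ is certainly a subgroup of $\K = M\h M^{-1}$ distinct from $P$, but this in no way prevents $P$ itself from also sitting inside $\K$ (which it does, being $\h \cap \K$). Your argument supplies exactly the missing step: starting from an arbitrary $M_0 \in A_{\h,\K}$ (nonempty by Proposition \ref{class}), you observe that $P$ and $M_0 P M_0^{-1}$ are two subgroups of $\K$ isomorphic to $\G$, invoke the ``friendliness'' property inside $\K \simeq \I$ to find $h \in \K$ carrying one to the other, and check that $M = hM_0$ still conjugates $\h$ to $\K$ (because $h$ normalises $\K$) while now normalising $P$. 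This constructive correction of the conjugator is more work than the paper's argument but yields a complete, self-contained proof, whereas the paper's version only becomes rigorous if one reads into it essentially the argument you wrote out. Your closing remark on non-triviality is also fine, since $P \in \C_{B_6}(\K_{\G})$ forces $|P| < 60$ and hence $\h \neq \K$.
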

\begin{proof}
Let us suppose $A_{\h,\K} \cap N_{B_6}(H) = \varnothing$. Then $M P M^{-1} \neq P$, for all $M \in A_{\h,\K}$. This implies, since $M \h M^{-1} = \K$, that $P$ is not a subgroup of $\K$, which is a contradiction.
\end{proof}

We give now an explicit example. We consider the representation $\hat{\I}$ as in \eqref{gen}, and its subgroup $\K_{\D_{10}}$ (the explicit form is given in the Appendix). With \texttt{GAP}, we find the other representation $\h_0 \in \C(\hat{\I})$ such that
$\K_{\D_{10}} = \hat{\I} \cap \h_0$. Its explicit form is given by
\begin{equation*}
\h_0 =  \left< \left( \begin{array}{cccccc}
0 & 0 & 0 & 0 & -1 & 0 \\
0 & 0 & 0 & 1 & 0 & 0 \\
0 & 0 & -1 & 0 & 0 & 0 \\
0 & 1 & 0 & 0 & 0 & 0 \\
-1 & 0 & 0 & 0 & 0 & 0 \\
0 & 0 & 0 & 0 & 0 & -1 
\end{array} \right), \left( \begin{array}{cccccc}
0 & 0 & 0 & 0 & -1 & 0 \\
0 & 0 & -1 & 0 & 0 & 0 \\
0 & 0 & 0 & 0 & 0 & 1\\
1 & 0 & 0 & 0 & 0 & 0 \\
0 & 0 & 0 & -1 & 0 & 0 \\
0 & -1 & 0 & 0 & 0 & 0 
\end{array} \right) \right>.
\end{equation*}   

A direct computation shows that the matrix 
\begin{equation*}
M =  \left( \begin{array}{cccccc}
1 & 0 & 0 & 0 & 0 & 0 \\
0 & -1 & 0 & 0 & 0 & 0 \\
0 & 0 & 1 & 0 & 0 & 0\\
0 & 0 & 0 & 1 & 0 & 0 \\
0 & 0 & 0 & 0 & 1 & 0 \\
0 & 0 & 0 & 0 & 0 & 1 
\end{array} \right) 
\end{equation*}
 belongs to $N_{B_6}(\K_{\D_{10}})$ and conjugate $\hat{\I}$ with $\h_0$. Note that $\text{det}M = -1$. 

\section{Conclusions}
In this work we explored the subgroup structure of the hyperoctahedral group. In particular we found the class of the crystallographic representations of the icosahedral group, whose order is 192. Any such representation, together with its corresponding projection operator $\pi^{\parallel}$, can be chosen to construct icosahedral quasicrystals via the cut-and-project method. We then studied in detail  the subgroup structure of this class. For this, 
we proposed a method based on spectral graph theory and introduced the concept of $\G$-graph, for a subgroup $\G$ of the icosahedral group. This allowed us to study the intersection and the subgroups shared by different representations. We have shown that, if we fix any representation $\h$ in the class and a maximal subgroup $P$ of $\h$, then there exists exactly one other representation $\K$ in the class such that $P = \h \cap \K$. As explained in the introduction, this can be used to describe transitions which keep intermediate symmetry encoded by $P$. In particular, this result implies in this context that a transition from a structure arising from $\h$ via projection will result in a structure obtainable for $\K$ via projection if the transition has intermediate symmetry described by $P$. 
Therefore, this setting is the starting point to analyse structural transitions between icosahedral quasicrystals, following the methods proposed in \cite{kramer}, \cite{katz} and \cite{paolo}, which we are planning to address in a forthcoming publication.

These mathematical tools have many applications also in other areas. A prominent example is virology. Viruses package their genomic material into protein containers with regular structures that can be modelled via lattices and group theory. Structural transitions of these containers, which involve rearrangements of the protein lattices, are important in rendering certain classes of viruses infective. As shown in \cite{giuliana}, such structural transitions can be modelled using projections of 6D icosahedral lattices and their symmetry properties. The results derived here therefore have a direct application to this scenario, and the information on the subgroup structure of the class of crystallographic representations of the icosahedral group and their intersections provides information on the symmetries of the capsid during the transition.

\section{Appendix A}
In order to render this paper self-contained, we provide the character tables of the subgroups of the icosahedral group, following \cite{artin} and \cite{fulton}.

\begin{itemize}
\item Tetrahedral group $\T$:

\begin{center}
\begin{tabular}{l|c c c c }
Irrep & $\mathcal C(e)$ & $4C_3$ & $4C_3^2$ & $3C_2$ \\
\hline
A & 1 & 1 & 1 & 1 \\
\multirow{2}*{E} 
& 1 & $\omega$ & $\omega^2$ & 1 \\
& 1 & $\omega^2$ & $\omega$ & 1 \\
T & 3 & 0 & 0 & -1
\end{tabular}
\end{center}

where $\omega = e^{\frac{2\pi i}{3}}$.

\item Dihedral group $\D_{10}$:

\begin{center}
\begin{tabular}{l|c c c c }
Irrep & $E$ & $2C_5$ & $2C_5^2$ & $5C_2$ \\
\hline
$A_1$ & 1 & 1 & 1 & 1 \\
$A_2$ & 1 & 1 & 1 & -1 \\
$E_1$ & 2 & $\gamma$ & $\gamma'$ & 0 \\
$E_2$ & 2 & $\gamma'$ & $\gamma$ & 0
\end{tabular}
\end{center}

with $\gamma = 2\text{cos}(\frac{2\pi}{5}) = \frac{\sqrt{5}-1}{2}$ and $\gamma' = 2\text{cos}(\frac{4\pi}{5}) = -\frac{\sqrt{5}+1}{2}$.

\item Dihedral group $\D_6$ (isomorphic to the symmetric group $S_3$):

\begin{center}
\begin{tabular}{l|c c c }
Irrep & $E$ & $3C_2$ & $2C_3$  \\
\hline
$A_1$ & 1 & 1 & 1 \\
$A_2$ & 1 & -1 & 1 \\
$E$ & 2 & 0 & -1
\end{tabular}
\end{center}

\item Cyclic group $C_5$:

\begin{center}
\begin{tabular}{l|c c c c c }
Irrep & e & $C_5$ & $C_5^2$ & $C_5^3$ & $C_5^4$   \\
\hline
A & 1 & 1 & 1 & 1 & 1  \\
\multirow{2}*{$E_1$} & 1 & $\epsilon$ & $\epsilon^2$ & $\epsilon^{2*}$ & $\epsilon^*$  \\
 & 1 & $\epsilon^*$ & $\epsilon^{2*}$ & $\epsilon^2$ & $\epsilon$  \\
\multirow{2}*{$E_2$} & 1 & $\epsilon^2$ & $\epsilon^*$ & $\epsilon$ & $\epsilon^{2*}$  \\
 & 1 & $\epsilon^{2*}$ & $\epsilon$ & $\epsilon^*$ & $\epsilon^2$  
\end{tabular}
\end{center}
where $\epsilon = e^{\frac{2\pi i}{5}}$. 

\item Dihedral group $D_4$ (the Klein Four Group):

\begin{center}
\begin{tabular}{l|c c c c}
Irrep & E& $C_{2x}$ & $C_{2y}$ & $C_{2z}$    \\
\hline
$A$ & 1 & 1 & 1 & 1 \\
$B_1$ & 1 & 1 & -1 & -1 \\
$B_2$ & 1 & -1 & 1 & -1 \\
$B_3$ & 1 & -1 & -1 & 1 
\end{tabular}
\end{center}

\item Cyclic group $C_3$:

\begin{center}
\begin{tabular}{l|c c c}
Irrep & E& $C_3$ & $C_3^2$    \\
\hline
$A$ & 1 & 1 & 1 \\
 \multirow{2}*{E} & 1 & $\eta$ & $\eta^2$ \\
 & 1 & $\eta^2$ & $\eta$  
\end{tabular}
\end{center}
where $\eta = e^{\frac{2\pi i}{3}}$.

\item Cyclic group $C_2$: 
\begin{center}
\begin{tabular}{l|c c}
Irrep & E & $C_2$ \\
\hline
$A$ & 1 & 1 \\
$B$ & 1 & -1 
\end{tabular}
\end{center}

\end{itemize}

\section{Appendix B}
In this Appendix we show the explicit forms of $\K_{\G}$, the representations in $B_6$ of the subgroups of $\I$, together with their decompositions in $GL(6,\R)$. 
\begin{equation*}
\K_{\T} = \left< \left( \begin{array}{cccccc}
0 & 0 & 0 & 0 & 0 & 1 \\
0 & 0 & 0 & 0 & 1 & 0 \\
0 & 0 & -1 & 0 & 0 & 0 \\
0 & 0 & 0 & -1 & 0 & 0 \\
0 & 1 & 0 & 0 & 0 & 0 \\
1 & 0 & 0 & 0 & 0 & 0 
\end{array} \right), \left( \begin{array}{cccccc}
0 & 1 & 0 & 0 & 0 & 0 \\
0 & 0 & 0 & -1 & 0 & 0 \\
0 & 0 & 0 & 0 & 0 & -1\\
-1 & 0 & 0 & 0 & 0 & 0 \\
0 & 0 & 1 & 0 & 0 & 0 \\
0 & 0 & 0 & 0 & -1 & 0 
\end{array} \right) \right>,
\end{equation*}
\begin{equation*}
\K_{\D_{10}} = \left< \left( \begin{array}{cccccc}
0 & 0 & 0 & 0 & 0 & -1 \\
0 & -1 & 0 & 0 & 0 & 0 \\
0 & 0 & 0 & 1 & 0 & 0 \\
0 & 0 & 1 & 0 & 0 & 0 \\
0 & 0 & 0 & 0 & -1 & 0 \\
-1 & 0 & 0 & 0 & 0 & 0 
\end{array} \right), \left( \begin{array}{cccccc}
0 & 0 & 0 & 0 & 0 & 1 \\
0 & 1 & 0 & 0 & 0 & 0 \\
0 & 0 & 0 & 0 & -1 & 0\\
-1 & 0 & 0 & 0 & 0 & 0 \\
0 & 0 & 0 & 1 & 0 & 0 \\
0 & 0 & 1 & 0 & 0 & 0 
\end{array} \right) \right>,
\end{equation*}
\begin{equation*}
\K_{\D_{6}} = \left< \left( \begin{array}{cccccc}
0 & 0 & 0 & 0 & 0 & -1 \\
0 & -1 & 0 & 0 & 0 & 0 \\
0 & 0 & 0 & 1 & 0 & 0 \\
0 & 0 & 1 & 0 & 0 & 0 \\
0 & 0 & 0 & 0 & -1 & 0 \\
-1 & 0 & 0 & 0 & 0 & 0 
\end{array} \right),  \left( \begin{array}{cccccc}
0 & 0 & 0 & 0 & 0 & 1 \\
0 & 0 & 0 & 1 & 0 & 0 \\
0 & -1 & 0 & 0 & 0 & 0\\
0 & 0 & -1 & 0 & 0 & 0 \\
1 & 0 & 0 & 0 & 0 & 0 \\
0 & 0 & 0 & 0 & 1 & 0 
\end{array} \right) \right>,
\end{equation*}
\begin{equation*}
\K_{C_5} = \left< \left( \begin{array}{cccccc}
0 & 0 & 0 & 0 & 0 & 1 \\
0 & 1 & 0 & 0 & 0 & 0 \\
0 & 0 & 0 & 0 & -1 & 0\\
-1 & 0 & 0 & 0 & 0 & 0 \\
0 & 0 & 0 & 1 & 0 & 0 \\
0 & 0 & 1 & 0 & 0 & 0 
\end{array} \right) \right>,
\end{equation*}
\begin{equation*}
\K_{\D_{4}} = \left< \left( \begin{array}{cccccc}
0 & 0 & 0 & 0 & 0 & -1 \\
0 & -1 & 0 & 0 & 0 & 0 \\
0 & 0 & 0 & 1 & 0 & 0 \\
0 & 0 & 1 & 0 & 0 & 0 \\
0 & 0 & 0 & 0 & -1 & 0 \\
-1 & 0 & 0 & 0 & 0 & 0 
\end{array} \right),  \left( \begin{array}{cccccc}
0 & 0 & 0 & 0 & 0 & 1 \\
0 & 0 & 0 & 0 & 1 & 0 \\
0 & 0 & -1 & 0 & 0 & 0 \\
0 & 0 & 0 & -1 & 0 & 0 \\
0 & 1 & 0 & 0 & 0 & 0 \\
1 & 0 & 0 & 0 & 0 & 0 
\end{array} \right) \right>.
\end{equation*}
\begin{equation*}
\K_{C_3} = \left< \left( \begin{array}{cccccc}
0 & 0 & 0 & 0 & 0 & 1 \\
0 & 0 & 0 & 1 & 0 & 0 \\
0 & -1 & 0 & 0 & 0 & 0\\
0 & 0 & -1 & 0 & 0 & 0 \\
1 & 0 & 0 & 0 & 0 & 0 \\
0 & 0 & 0 & 0 & 1 & 0 
\end{array} \right) \right>,
\end{equation*}
\begin{equation*}
\K_{C_2} = \left< \left( \begin{array}{cccccc}
0 & 0 & 0 & 0 & 0 & 1 \\
0 & 0 & 0 & 0 & 1 & 0 \\
0 & 0 & -1 & 0 & 0 & 0 \\
0 & 0 & 0 & -1 & 0 & 0 \\
0 & 1 & 0 & 0 & 0 & 0 \\
1 & 0 & 0 & 0 & 0 & 0 
\end{array} \right) \right>.
\end{equation*}

\begin{equation*}
\K_{\T} \simeq 2T, \qquad  \K_{\D_{10}} \simeq 2A_2 \oplus E_1 \oplus E_2, \qquad  \K_{\D_6} \simeq 2A_2 \oplus 2E, \qquad  \K_{\C_5} \simeq 2A \oplus E_1 \oplus E_2,
\end{equation*}
\begin{equation*}
\K_{\D_4} \simeq 2B_1 \oplus 2B_2 \oplus 2B_3, \qquad \K_{\C_3} \simeq 2A \oplus 2E, \qquad \K_{\C_2} \simeq 2A \oplus 4B.
\end{equation*}

\section{Appendix C}
In this Appendix we show our algorithms, which have been implemented in \texttt{GAP} and used in various sections of the paper. We list them with a number from 1 to 5. 
\begin{enumerate}
\item Classification of the crystallographic representations of $\I$ (see Section \ref{cube_section}). The algorithm carries out steps 1-4 used to prove Proposition \ref{class}.
\item Computation of the vertex star of a given vertex $i$ in the $\G$-graphs. In the following, $H$ stands for the class $\C_{B_6}(\hat{\I})$ of the crystallographic representations of $\I$, $i \in \{1, \ldots, 192 \}$ denotes a vertex in the $\G$-graph corresponding to the representation $H[i]$ and $n$ stands for the
size of $\G$: we can use the size instead of the explicit form of the subgroup since, in the case of the icosahedral group, all the non isomorphic subgroups have different sizes.
\item Computation of the adjacency matrix of the $\G$-graph.
\item BFS algorithm for the computation of the connected component of a given vertex $i$ of the $\G$-graph.  
\item Computation of \emph{all} connected components of a $\G$-graph.
\end{enumerate}

\noindent {\bf{Algorithm 1}}: \\

{\small{
\noindent \texttt{gap >   B6:= Group([(1,2)(7,8),(1,2,3,4,5,6)(7,8,9,10,11,12),(6,12)]); \\
gap >   C:= ConjugacyClassesSubgroups(B6); \\
gap >  C60:= Filtered(C,x->Size(Representative(x))=60); \\
gap >  Size(C60); \\
 3 \\
gap >  s60:= List(C60,Representative); \\
gap > I:= AlternatingGroup(5); \\
gap> IsomorphismGroups(I,s60[1]); \\
 $[ (2,4)(3,5), (1,2,3) ] -> [ (1,3)(2,4)(7,9)(8,10), (3,10,11)(4,5,9) ]$ \\ 
gap>   IsomorphismGroups(I,s60[2]);\\
 $[ (2,4)(3,5), (1,2,3) ] -> [ (1,2)(3,10)(4,9)(5,11)(6,12)(7,8), (1,2,4)(3,12,5)(6,11,9)(7,8,10) ]$ \\
gap >  IsomorphismGroups(I,s60[3]); \\
 $[ (2,4)(3,5), (1,2,3) ] -> [ (2,6)(4,11)(5,10)(8,12), (1,3,5)(2,4,6)(7,9,11)(8,10,12) ]$ \\
gap> CB6s60:= ConjugacyClassSubgroups(B6,s60[2]); \\
gap>  Size(CB6s60); \\
 192}}}\\

\noindent {\bf{Algorithm 2}}: \\

{\small{
\noindent \texttt{gap> vertex star:=function(H,i,n) \\
> local j,R,S; \\
> R:=[]; \\
> for j in [1..Size(H)] do \\
> S:=Intersection(H[i],H[j]); \\
> if Size(S) = n then \\
> R:=Concatenation(R,[j]); \\
> fi; \\
> od; \\
> return R; \\
> end;}}} \\

\noindent {\bf{Algorithm 3}}: \\

{\small{
\noindent \texttt{ gap> adjacency matrix:=function(H,n) \\
> local i,j,C,A; \\
> A:=NullMat(Size(H),Size(H)); \\
> for i in [1..Size(H)] do \\
> C:=vertex star(H,i,n); \\
> for j in [1..Size(C)] do \\
> A[i][C[j]]:=1; \\
> od; \\
> od; \\
> return A; \\
> end;}}}\\

\noindent {\bf{Algorithm 4}}: \\

{\small{
\noindent \texttt{gap> connected component:=function(H,i,n) \\
> local R,S,T,j,k,C; \\
> R:=[i]; \\
> S:=[i]; \\
> while Size(S) <= Size(H) do \\
> T:=[]; \\
> for j in [1..Size(R)] do \\
> C:=vertex star(H,R[j],n); \\
> for k in [1..Size(C)] do \\
> if (C[k] in S) = false then \\
> Add(S,C[k]); \\
> T:=Concatenation(T,[C[k]]); \\
> fi; \\
> od; \\
> od; \\
> if T = [] then return S; \\
> else \\
> R:=T; \\
> fi; \\
> od; \\
> return S; \\
> end;}}} \\

\noindent {\bf{Algorithm 5}}: \\

{\small{
\noindent \texttt{gap> connected components:=function(H,n)\\
> local j,S,C;\\
> C:=[connected component(H,1,n)];\\
> S:=Flat(C);\\
> if Size(S) = Size(H) then return S;\\
> fi;\\
> for j in [1..Size(H)] do\\
> if (j in S) = false then\\
> C:=Concatenation(C,[connected component(H,j,n)]);\\
> S:=Flat(C);\\
> if Size(S) = Size(H) then return C;\\
> fi;\\
> fi;\\
> od;\\
> end;}}}\\

{\bf{Acknowledgements}}. We would like to thank Silvia Steila, Pierre-Philippe Dechant, Paolo Cermelli and Giuliana Indelicato for useful discussions, and Paolo Barbero  and Alessandro Marchino for technical help.

\bibliography{zappa_bibl}{}
\bibliographystyle{plain}

\end{document}